\newcommand{\Lah}{\mathop{\mathrm{Lah}}\nolimits}
\renewcommand{\P}{\mathbb {P}} 
\newcommand{\E}{\mathbb {E}}   
\newcommand{\C}{\mathbb {C}}   
\newcommand{\R}{\mathbb {R}}   
\newcommand{\Z}{\mathbb {Z}}   
\newcommand{\N}{\mathbb {N}}   
\newcommand{\ind}{\mathbbm{1}} 
\renewcommand{\L}{\mathbb{L}}
\newcommand{\e}{\mathrm{e}}
\newcommand{\diff}{\mathop{}\!{d}}
\renewcommand{\Re}{\operatorname{Re}}
\renewcommand{\Im}{\operatorname{Im}}
\DeclareMathOperator{\Var}{Var}
\newcommand{\todistr}{\overset{{d}}{\underset{n\to\infty}\longrightarrow}}
\newcommand{\ton}{\overset{}{\underset{n\to\infty}\longrightarrow}}
\theoremstyle{plain}
\newtheorem{theorem}{Theorem}[section]
\newtheorem{satz}[theorem]{Theorem}
\newtheorem{lem}[theorem]{Lemma}
\newtheorem{kor}[theorem]{Corollary}
\newtheorem{prop}[theorem]{Proposition}
\theoremstyle{definition}
\newtheorem{dfn}[theorem]{Definition}
\newtheorem{bem}[theorem]{Remark}
\theoremstyle{remark}
\numberwithin{equation}{section}
\newcommand{\stirsec}[2]{{ #1 \brace #2}}
\newcommand{\stirfir}[2]{{ #1 \brack #2}}
\begin{document}

\onehalfspacing
\pagenumbering{roman}
\setcounter{page}{1}
\doublespacing
\singlespacing
\pagenumbering{arabic}
\raggedbottom
\onehalfspacing

\title[$r$-Lah distribution]{$r$-Lah distribution: properties, limit theorems and an application to compressed sensing}

\begin{abstract}
We introduce and study the  $r$-Lah distribution whose definition involves $r$-Stirling numbers of both kinds. We compute its expectation and  variance, show its log-concavity and prove limit theorems for this distribution. We use these results to prove threshold phenomena for convex cones generated by random walks and to analyze the probability of unique recovery of sparse monotone signals from linear measurements.
\end{abstract}

\author{Zakhar Kabluchko and David Albert Steigenberger}

\keywords{$r$-Stirling numbers, $r$-Lah numbers, central limit theorem, mod-$\phi$-convergence, log-concavity, compressed sensing, convex cones, random walks}
\subjclass[2020]{Primary: 	11B73, 60C05;  Secondary: 60D05, 60E05, 60F05, 60F10}
\maketitle

\section{Introduction}
The goal of this paper is to introduce and study the $r$-Lah distribution.
The motivation  comes from the following problem (c.f.\ Section~6 in \cite{kabluchko:2022}). Let $\xi_1 ,\dots, \xi_n$ be a collection of i.i.d.\ random $d$-dimensional vectors with absolutely continuous, centrally symmetric distribution (the latter means that $\xi_1$ has the same law as $-\xi_1$). Their partial sums are denoted by
\begin{align*}
S_i \coloneqq \xi_1+\dotsm+\xi_i, \qquad 1 \leq i \leq n,
\qquad
S_0 = 0.
\end{align*}
The sequence $S_0,S_1,\dotsc,S_n$ is called a random walk.
We are interested in  the random polyhedral cone $C_n^B\subseteq \R^d$ defined by
\begin{align*}
 C_n^B \coloneqq \text{pos}\lbrace S_1, \dotsc S_n \rbrace = \lbrace \lambda_1 S_1 + \dotsm + \lambda_n S_n \;| \; \lambda_1 , \dotsc , \lambda_n \geq 0 \rbrace,
\end{align*}
which is referred to as the Weyl random cone of type B+ due to its relation to the Weyl chamber of type $B$, see \cite{godland:2022}.
Corollary~2.12 in \cite{godland:2022} (see also~\cite[Eqn.~(2.23)]{godlandschlaefli:2022}) states that the expected total number of $k$-dimensional faces of $C_n^B$ is
\begin{align}\label{Cones}
\E \Bigl[ f_k \bigl( C_n^B \bigr) \Bigr] = \frac{2 \cdot k!}{n!} \sum_{l=0}^\infty \stirfir{n}{d-2l-1}_{\frac{1}{2}}\stirsec{d-2l-1}{k}_{\frac{1}{2}},
\end{align}
for all $k\in \{0,\ldots, d-1\}$ and $n\geq d$,
where the numbers on the right-hand side are the $r$-Stirling numbers with parameter $r=1/2$, to be defined in Section~\ref{ChapterrStirling}.  Our goal is to investigate the relation between the expected number of $k$-dimensional faces of $C_n^B$ and the maximal possible number $\binom{n}{k}$ of such $k$-faces.
Namely, if $d\to\infty$ and $n=n(d)\to \infty$ grows sufficiently slow, then we expect the number of $k$-faces of $C_n^B$ to be ``close'' to the maximal number $\binom nk$, while in the regime when $d\to\infty$ and $n = n(d)\to \infty$ grows sufficiently fast, we expect it to be much smaller than $\binom nk$. We will confirm this intuition and identify the position of the threshold where the transition occurs. To this end, we introduce  the $r$-Lah distribution which is closely related to the right-hand side of~\eqref{Cones}. This distribution generalizes the Lah distribution, which has been studied in~\cite{kabluchko:2022} and corresponds to $r=0$.  As it turns out, limit theorems for the $r$-Lah distribution lead to threshold phenomena for the face numbers of $C_n^B$. Threshold phenomena of this type were first investigated by Vershik and Sporyshev~\cite{vershik_sporyshev_asymptotic_faces_random_polyhedra1992} and Donoho and Tanner~\cite{donoho_neighborliness_proportional,donoho_tanner_neighborliness,donoho_tanner,donoho:2010} in the setting of random projections of regular simplices, cubes, orthants, and Gaussian polytopes.

This paper is organized as follows.
In Section \ref{ChapterrStirling} we will recall the definition of the $r$-Stirling numbers and use these to define the $r$-Lah distribution. In Section \ref{ChapterProperties} we derive explicit formulas for the expectation and the variance of this distribution and prove its log-concavity. Furthermore, we prove several limit theorems including a central limit theorem. These limit theorems will then be used to prove threshold phenomena for the expected number of $k$-faces of $C_n^B$ in Section~\ref{threshold_phenomena}. As another application, in Section~\ref{sec:compressed_sensing} we analyze threshold phenomena for the probability of unique recovery of sparse monotone signals from  linear measurements.

\section{\texorpdfstring{$r$}{r}-Stirling numbers and the \texorpdfstring{$r$}{r}-Lah distribution}\label{ChapterrStirling}
The Stirling number of the first kind, denoted by $\stirfir{n}{k}$, counts the number of permutations of $\lbrace 1, \dotsc ,n \rbrace$ that have exactly $k$ cycles. The Stirling number of the second kind counts the number of ways to partition the set $\lbrace 1, \dotsc ,n \rbrace$ into $k$ non-empty subsets and is denoted by $\stirsec{n}{k}$. For $k \in \N_0$, these numbers can be  defined~\cite[Eqs.~(2.23), (2.19)]{mezo_book} by their exponential generating functions
\begin{align}\label{Definition_exp_generating_Stir}
\frac{1}{k!} \left( \log\left( (1-x)^{-1} \right) \right)^k = \sum_{n=k}^\infty \stirfir{n}{k} \frac{x^n}{n!}
\quad \text{ and } \quad \frac{1}{k!} \left(\e^x-1\right)^k = \sum_{n=k}^\infty \stirsec{n}{k} \frac{x^n}{n!}.
\end{align}
We use the usual Karamata notation for Stirling numbers~\cite{knuth_notation}. We refer to~\cite{graham_etal_book,mezo_book,charalambides_book_enum_combin,charalambides_book_combinatorial_methods} for information on Stirling numbers and their probabilistic applications.
The Lah number~\cite{lah:1954,petkovsek:2022} is defined by
\begin{align*}
L(n,k) \coloneqq \sum_{j=k}^n \stirfir{n}{j} \stirsec{j}{k}
\end{align*}
and counts the number of ways to partition the set $\lbrace 1, \dotsc ,n \rbrace$ into $k$ non-empty subsets and to linearly order the elements inside each subset.

One can generalize~\cite{broder:1984,merris:2000,mezo_book,nyul:2015} all three kinds of numbers in the following way: for $r \in \N_0$, the $r$-Stirling number of the first kind $\stirfir{n}{k}_r$ counts the number of permutations of the set $\lbrace 1,\dotsc ,n+r \rbrace$ with exactly $k+r$ cycles, such that the elements $1,\dotsc ,r$ are in distinct cycles. The $r$-Stirling number of the second kind, $\stirsec{n}{k}_r$, as well as the $r$-Lah number, $L(n,k)_r$, can be defined analogously\footnote{We follow the notation of  Nyul and R\'acz~\cite{nyul:2015}:  what we denote by $\stirfir{n}{k}_r$ corresponds to $\stirfir{n+r}{k+r}_r$ in the notation of Broder~\cite{broder:1984}, and similarly for $\stirsec{n}{k}_r$.}.
The exponential generating functions of the $r$-Stirling numbers are given by
\begin{align}\label{Definition_r_Stir_first}
\frac{1}{k!} \left( \log\left( (1-x)^{-1} \right) \right)^k (1-x)^{-r} = \sum_{n=k}^\infty \stirfir{n}{k}_r \frac{x^n}{n!} \quad
\end{align}
and
\begin{align}\label{Definition_r_Stir_second}
\quad \frac{1}{k!} \left(\e^x-1\right)^k \e^{rx} = \sum_{n=k}^\infty \stirsec{n}{k}_r \frac{x^n}{n!}.
\end{align}
It is clear from these formulas that the regular Stirling numbers are recovered by taking $r=0$.
Using~\eqref{Definition_exp_generating_Stir} and the Taylor series of $\e^{rx}$ and $(1-x)^{-r}$ one gets (c.f.~\cite[p.~1661]{nyul:2015})
\begin{align}\label{Definition_r_STirl_as_polynomial}
\stirfir{n}{k}_r
=
\sum_{j=k}^n \stirfir{n}{j} \binom{j}{k} r^{j-k}
\qquad \text{ and } \qquad
\stirsec{n}{k}_r
=
\sum_{j=k}^n \binom{n}{j} \stirsec{j}{k} r^{n-j}
.
\end{align}
These formulas define the $r$-Stirling numbers for $n\in \N_0$, $k\in \{0,\ldots, n\}$ and arbitrary non-integer $r\in \R$. Furthermore, they prove that $\stirfir{n}{k}_r$ and $\stirsec{n}{k}_r$  are polynomials in $r$.
The $r$-Stirling numbers satisfy the recurrence relations
\begin{align}
\stirfir{n}{k}_r  &= (n+r-1)\stirfir{n-1}{k}_r + \stirfir{n-1}{k-1}_r, \label{recurrence_stirling1}\\
 \stirsec{n}{k}_r &= (k+r)\stirsec{n-1}{k}_r + \stirsec{n-1}{k-1}_r. \label{recurrence_stirling2}
\end{align}
A combinatorial proof for $r\in \N_0$ can be found in~\cite[Theorems~1,2]{broder:1984}, but the relations stay true for all $r\in \R$ since both sides are polynomials in $r$.
Notable special cases are $\stirfir{n}{0}_r= r(r+1)\dotsm (r+n-1)$ and $\stirsec{n}{0}_r = r^n$. In particular, $\stirfir{0}{0}_r = \stirsec{0}{0}_r = 1$  for all $r\in \R$. For integer $k>n$ and $k<0$ we put $\stirfir{n}{k}_r=\stirsec{n}{k}_r = 0$, for all $r\in \R$, $n\in \N_0$. For these and further  properties of the $r$-Stirling numbers we refer to~\cite{broder:1984,nyul:2015} and~\cite[Chapter~8]{mezo_book}.

The $r$-Lah number~\cite{nyul:2015}, see also \cite{belbachir:2013,cheon:2012,shattuck:2016}, is defined as
\begin{align}\label{eq:Lah_r_def}
L(n,k)_r
\coloneqq \sum_{j=k}^n \stirfir{n}{j}_r \stirsec{j}{k}_r
=
\binom{n+2r-1}{k+2r-1} \frac{n!}{k!},
\end{align}
for $n\in \N_0$, $k\in \{0,\ldots, n\}$, $r\in \R$.
The second equation has been proven for $r\in \N_0$ in Theorem 3.7 in \cite{nyul:2015}
but it stays true for $r\in \R$ since both sides are polynomials in $r$.  We are now in a position to  define the $r$-Lah distribution.

\begin{dfn}\label{DfnrLah}
A random variable $X=\Lah(n,k)_r$ has an $r$-Lah distribution with parameters $n \in \N$, $k\in \{0,\ldots, n\}$ and $r \in [0,\infty)$ (where the case $k=r=0$ is always excluded) if
\begin{align}\label{eq:Lah_distr_def}
\P [\Lah(n,k)_r=j] = \frac{1}{L(n,k)_r} \stirfir{n}{j}_r \stirsec{j}{k}_r, \qquad  j \in \lbrace k , k+1, \dotsc, n \rbrace.
\end{align}
\end{dfn}

We exclude the case $k=r=0$ in which $L(n,0)_0 = 0$, $n\in \N$,  since  $(-1)! = \infty$. For $r=0$, the $r$-Lah distribution coincides with the Lah distribution that has been studied in \cite{kabluchko:2022}.  The present paper extends these results to arbitrary real $r\geq 0$.

\begin{dfn}\label{DfnrParamLah}
We call  $(n,k,r)$ an  admissible triple of parameters for the $r$-Lah distribution if  $n \in \N$, $k\in \{0,\ldots, n\}$, $r \in [0,\infty)$ and $\max\{k,r\} > 0$.
\end{dfn}

\begin{bem}
The term $\stirfir{n}{j}_r \stirsec{j}{k}_r$ counts the number of permutations of $\{1,2,\ldots, n+r\}$ with $j+r$ cycles which are decomposed into $k+r$ blocks of cycles and have the property that the elements $1,\ldots, r$ appear in distinct blocks of cycles (and, consequently, in distinct cycles). The well-known Foata correspondence is a bijection between permutations of $\{1,\ldots, m\}$ with $i$ cycles and permutations of the same set with $i$ minima. Applying the Foata correspondence to each block of cycles yields a decomposition of $\{1,\ldots, n+r\}$ into $k+r$ blocks, with a specified ordering of the elements inside each block, with $j+r$ local minima inside the blocks  and such that the elements $1,\ldots, r$ are inside different blocks. It follows from~\eqref{eq:Lah_r_def} and~\eqref{eq:Lah_distr_def} that $\Lah(n,k)_r-r$ is the distribution of the number of local minima inside a uniformly random decomposition as just described. This interpretation stems from the combinatorial explanation of \eqref{eq:Lah_r_def} established in  Theorem~4.1 (iv) in \cite{shattuck:2016} for $r=s$.
\end{bem}

\section{Properties of the \texorpdfstring{$r$}{r}-Lah distribution}\label{ChapterProperties}
In this section, we will  derive explicit and asymptotic formulas for the expectation and the variance of the $r$-Lah distribution.  We will show that the $r$-Lah distribution is log-concave and thus unimodal.  Finally, we will use the concept of mod-Poisson convergence to prove limit theorems for the $r$-Lah distribution, namely central and local limit theorems, precise large deviations and an asymptotic formula for its mode.

\subsection{Expectation and variance}

\begin{satz}\label{expectationrLah}
For any admissible triple of parameters $(n,k,r)$ the expectation of an $r$-Lah distributed random variable $\Lah(n,k)_r$ is given by
\begin{alignat}{3}
&\E [\Lah(n,k)_r] &=& \frac{k(n+2r)}{n-(k-1)} \bigl[ H_{n+2r} - H_{k+2r-1} \bigr] + r \bigl[ H_{n+2r-1} - H_{k+2r-1} \bigr] \label{ExpectrLahfirst} \\
&       &=& \frac{k+ \bigl[ k(n+r) + r(n+1) \bigr] \cdot \bigl[ H_{n+2r-1} - H_{k+2r-1} \bigr]}{n-(k-1)}. \label{ExpectrLahsecond}
\end{alignat}
\end{satz}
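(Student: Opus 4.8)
The plan is to encode the expectation as a logarithmic derivative of a generating function. Introduce the marking polynomial
$$A_{n,k}(t) = \sum_{j=k}^n t^j \stirfir{n}{j}_r \stirsec{j}{k}_r,$$
so that by \eqref{eq:Lah_r_def} and \eqref{eq:Lah_distr_def} we have $A_{n,k}(1) = L(n,k)_r$ and $\E[\Lah(n,k)_r] = A'_{n,k}(1)/A_{n,k}(1)$, where the prime denotes $\partial_t$. Everything then reduces to finding a closed form for $A'_{n,k}(1)$.

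First I would compute the bivariate exponential generating function $G(x,t) = \sum_{n\geq k} A_{n,k}(t)\, x^n/n!$. Interchanging the order of summation and inserting the first-kind generating function \eqref{Definition_r_Stir_first} gives $G(x,t) = (1-x)^{-r}\sum_{j\geq k}\stirsec{j}{k}_r (t\log\tfrac{1}{1-x})^j/j!$. The crucial simplification is the identity $\e^{t\log\frac{1}{1-x}} = (1-x)^{-t}$: feeding $y = t\log\tfrac{1}{1-x}$ into the second-kind generating function \eqref{Definition_r_Stir_second} collapses the inner sum and yields the clean closed form
$$G(x,t) = \frac{1}{k!}\left((1-x)^{-t}-1\right)^k (1-x)^{-r(1+t)}.$$
As a sanity check, at $t=1$ this becomes $\tfrac{x^k}{k!}(1-x)^{-(k+2r)}$, whose coefficients recover $L(n,k)_r = \tfrac{n!}{k!}\binom{n+2r-1}{k+2r-1}$.

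Next I would differentiate $G$ in $t$ and set $t=1$. Writing $w=(1-x)^{-t}$ and using $\partial_t w = w\log\tfrac{1}{1-x}$, a short computation gives
$$\partial_t G\big|_{t=1} = \frac{1}{k!}\,\log\tfrac{1}{1-x}\; x^{k-1}(k+rx)(1-x)^{-(k+2r)}.$$
To extract $A'_{n,k}(1) = n!\,[x^n]\,\partial_t G|_{t=1}$ I would use the coefficient identity obtained by differentiating $(1-x)^{-\alpha}$ in the exponent: since $\partial_\alpha (1-x)^{-\alpha} = \log\tfrac{1}{1-x}\,(1-x)^{-\alpha}$ and $[x^m](1-x)^{-\alpha} = \binom{m+\alpha-1}{m}$, one gets
$$[x^m]\,\log\tfrac{1}{1-x}\,(1-x)^{-\alpha} = \binom{m+\alpha-1}{m}\bigl(H_{\alpha+m-1}-H_{\alpha-1}\bigr),$$
where $H$ denotes the generalized harmonic number $H_s = \psi(s+1)+\gamma$ (needed since $r$ is real). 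Applying this with $\alpha = k+2r$ to the two summands (shifts $m=n-k+1$ and $m=n-k$) produces $A'_{n,k}(1)$ as a combination of $\binom{n+2r}{n-k+1}(H_{n+2r}-H_{k+2r-1})$ and $\binom{n+2r-1}{n-k}(H_{n+2r-1}-H_{k+2r-1})$.

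Finally I would divide by $A_{n,k}(1)=\tfrac{n!}{k!}\binom{n+2r-1}{n-k}$; the binomial ratio $\binom{n+2r}{n-k+1}/\binom{n+2r-1}{n-k}$ telescopes via Gamma functions to $\tfrac{n+2r}{n-(k-1)}$, delivering \eqref{ExpectrLahfirst} directly. The second form \eqref{ExpectrLahsecond} then follows by a purely algebraic rearrangement using $H_{n+2r}=H_{n+2r-1}+\tfrac{1}{n+2r}$ and collecting the coefficient of $H_{n+2r-1}-H_{k+2r-1}$, which equals $k(n+r)+r(n+1)$. I expect the main obstacle to be the coefficient extraction in the middle step: one must commit to the generalized (digamma) harmonic numbers, differentiate the series in the exponent rigorously, and track the shifted indices carefully so that the correct $H_{n+2r}$ versus $H_{n+2r-1}$ appear. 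The generating-function collapse and the closing algebra are routine by comparison.
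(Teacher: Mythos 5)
Your proposal is correct, and its first half coincides with the paper's own route: the closed form $G(x,t)=\frac{1}{k!}\bigl((1-x)^{-t}-1\bigr)^k(1-x)^{-r(1+t)}$ is exactly Lemma~\ref{rstirexpgenproduct}, and differentiating in $t$ at $t=1$ to reach $\frac{1}{k!}\,x^{k-1}(k+rx)\log\frac{1}{1-x}\,(1-x)^{-(k+2r)}$ reproduces the first step of the paper's second lemma. Where you genuinely diverge is the coefficient extraction. The paper substitutes $x\mapsto -x$, passes through Cauchy's integral formula with the change of variable $1+x=\frac{1}{1+y}$ to trade $(1+x)^{-(k+2r)}$ for $(1+y)^{n+2r}$, then multiplies the Taylor series of $\log(1+y)$ and $(1+y)^{\nu}$ and invokes the convolution identity $\sum_{m=1}^{z}\binom{\nu}{z-m}\frac{(-1)^{m-1}}{m}=\binom{\nu}{z}\sum_{m=\nu-z+1}^{\nu}\frac{1}{m}$, proved by induction for integer $\nu$ and extended by polynomiality. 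You instead observe that $\log\frac{1}{1-x}\,(1-x)^{-\alpha}=\partial_\alpha(1-x)^{-\alpha}$, so that $[x^m]\log\frac{1}{1-x}(1-x)^{-\alpha}=\partial_\alpha\binom{m+\alpha-1}{m}=\binom{m+\alpha-1}{m}(H_{\alpha+m-1}-H_{\alpha-1})$, which hands you the harmonic numbers as the logarithmic derivative of the rising factorial with no contour substitution and no convolution identity. This is shorter and arguably more transparent about why harmonic differences appear; its only delicate point is justifying the differentiation in the real parameter $\alpha$, which is harmless since $[x^m](1-x)^{-\alpha}$ is a polynomial in $\alpha$ and $(1-x)^{-\alpha}$ is jointly analytic --- the same style of justification the paper already uses for its $t$-derivative and for its identity in $\nu$. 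The closing algebra (the binomial ratio $\frac{n+2r}{n-(k-1)}$ and the rearrangement via $H_{n+2r}=H_{n+2r-1}+\frac{1}{n+2r}$, with $k(n+2r)+r(n-k+1)=k(n+r)+r(n+1)$) checks out and matches both displayed forms.
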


Here, $H_n= 1+ \frac 12 +\dotso +\frac 1n$ denotes the $n$-th harmonic number. The harmonic number of fractional argument $z$ can be defined by $H_{z} = \Gamma'(z+1)/\Gamma(z+1) + \gamma$, but, actually, this is not necessary since the above formulas contain only differences of the form  $H_{\alpha + m} - H_{\alpha} :=  \sum_{j=\alpha+1}^{\alpha+m} \frac{1}{j}$ with integer $m\in \N_0$ and $\alpha>-1$.

For  $r=0$ Theorem~\ref{expectationrLah} yields the following corollary which has been overlooked in~\cite[Theorem 3.2]{kabluchko:2022}, where a more lengthy expression has been given.

\begin{kor}
The expectation of a Lah distributed random variable with parameters $n \in \N$, $k\in \{1,\ldots, n\}$ (and $r=0$) is given by
\begin{align*}
\E[\Lah(n,k)_0] = \frac{nk}{n-(k-1)} [H_{n} - H_{k-1}].
\end{align*}
\end{kor}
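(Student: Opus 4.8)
The plan is to obtain this corollary as the $r=0$ specialization of Theorem~\ref{expectationrLah}, so almost no independent work is required. The one point deserving a word is admissibility: by Definition~\ref{DfnrParamLah}, a triple $(n,k,0)$ is admissible precisely when $k>0$, which is exactly the hypothesis $k\in\{1,\ldots,n\}$ of the corollary. Hence Theorem~\ref{expectationrLah} applies verbatim to every parameter choice occurring in the statement, and the excluded case $k=r=0$ (where $L(n,0)_0=0$) is automatically avoided.

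I would carry out the substitution in the first representation~\eqref{ExpectrLahfirst}, since it is the more transparent of the two. Setting $r=0$ gives $H_{n+2r}=H_n$ and $H_{k+2r-1}=H_{k-1}$, so the first summand becomes $\frac{kn}{n-(k-1)}\bigl[H_n-H_{k-1}\bigr]$, while the second summand $r\bigl[H_{n+2r-1}-H_{k+2r-1}\bigr]$ carries an explicit factor $r$ and therefore vanishes identically. This already yields the asserted expression $\frac{nk}{n-(k-1)}\bigl[H_n-H_{k-1}\bigr]$.

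As a cross-check I would also insert $r=0$ into the second representation~\eqref{ExpectrLahsecond}, which produces $\frac{k+kn\bigl[H_{n-1}-H_{k-1}\bigr]}{n-(k-1)}$; invoking the identity $H_n=H_{n-1}+\frac1n$ one sees that $nk\bigl[H_n-H_{k-1}\bigr]=nk\bigl[H_{n-1}-H_{k-1}\bigr]+k$, so the two representations agree and confirm the result.

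There is essentially no obstacle here, as the mathematical content is carried entirely by Theorem~\ref{expectationrLah}; the value of isolating the statement is purely expository, recording that the $r=0$ case collapses to a compact closed form and thereby correcting the lengthier expression given in~\cite[Theorem 3.2]{kabluchko:2022}.
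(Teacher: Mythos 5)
Your proposal is correct and matches the paper's intent exactly: the corollary is obtained by setting $r=0$ in Theorem~\ref{expectationrLah}, where the second summand of~\eqref{ExpectrLahfirst} vanishes because of its explicit factor $r$. The admissibility remark and the cross-check against~\eqref{ExpectrLahsecond} via $H_n=H_{n-1}+\frac{1}{n}$ are both accurate, so nothing further is needed.
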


In order to prove Theorem \ref{expectationrLah} we need two lemmata.

\begin{lem}\label{rstirexpgenproduct}
For all $k\in \N_0$  and $r\in \R$ we have
\begin{align*}
\sum_{j=k}^{\infty} \sum_{n=j}^{\infty} \frac{1}{n!} \stirfir{n}{j}_r \stirsec{j}{k}_r t^j x^n = \frac{1}{k!} \bigl((1-x)^{-t}-1\bigr)^k (1-x)^{-(rt+r)}.
\end{align*}
\end{lem}

\begin{proof}
We compute the Taylor series of the function on the right-hand side using the exponential generating functions of the $r$-Stirling numbers:
\begin{align}
\frac{1}{k!} \bigl((1-x)^{-t}-1\bigr)^k (1-x)^{-(rt+r)}
&=
(1-x)^{-r} \Biggl[ \frac{1}{k!} \Bigr( \e^{- t \log (1-x) }-1 \Bigl)^k \e^{-rt \log (1-x) } \Biggr]
\notag \\
&\stackrel{\eqref{Definition_r_Stir_second}}{=}
(1-x)^{-r} \sum_{j=k}^{\infty} \Biggl[\stirsec{j}{k}_r  \frac{\bigl(- t \log (1-x) \bigr)^j}{j!} \Biggr]
\notag \\
&=
\sum_{j=k}^{\infty} \stirsec{j}{k}_r t^j \Biggl[ \frac{ \left(-\log (1-x)\right)^j}{j!} (1-x)^{-r} \Biggr]
\notag \\
&\stackrel{\eqref{Definition_r_Stir_first}}{=}
\sum_{j=k}^{\infty} \stirsec{j}{k}_r t^j \Biggl[ \sum_{n=j}^{\infty} \stirfir{n}{j}_r \frac{x^n}{n!} \Biggr]
\notag \\
&=
\sum_{j=k}^{\infty} \sum_{n=j}^{\infty} \frac{1}{n!} \stirfir{n}{j}_r \stirsec{j}{k}_r t^j x^n.
\notag
\end{align}
This gives the statement.
\end{proof}

Next we will use Lemma \ref{rstirexpgenproduct} to derive the formula that will help us calculate the expectation of the $r$-Lah distribution. As usual, $[x^n]f(x)$ denotes the coefficient of $x^n$ in the Taylor/Laurent  expansion of the function $f$.

\begin{lem}
For all $n\in \N$, $k\in \{0,\ldots, n\}$ and $r\in \R$ we have
\begin{alignat}{2}
\sum_{j=k}^n j \stirfir{n}{j}_r \stirsec{j}{k}_r &= (-1)^{n-k}\frac{n!}{k!} [x^{n-k+1}] \bigl( (1+x)^{-(k+2r)} \log(1+x)(k-rx) \bigr) \label{ExpectationrLahfirst} \\
&=  \frac{n!}{k!} [x^{n-k+1}] \Biggl( (1+x)^{n+2r} \log(1+x) \biggl( k+r \frac{x}{1+x} \biggr) \Biggr). \label{ExpectationrLahsecond}
\end{alignat}
\end{lem}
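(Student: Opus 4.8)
The plan is to start from Lemma~\ref{rstirexpgenproduct}, which expresses $\frac{k!}{n!}\stirfir{n}{j}_r\stirsec{j}{k}_r$ as the coefficient $[t^j][x^n]F(t,x)$ of the generating function $F(t,x)=\bigl((1-x)^{-t}-1\bigr)^k(1-x)^{-(rt+r)}$. To insert the extra factor $j$ into the sum $\sum_{j=k}^n j\stirfir{n}{j}_r\stirsec{j}{k}_r$, I would apply the Euler operator $t\,\partial_t$: since $j[t^j]G=[t^j]\,t\partial_t G$, and since for fixed $n$ the coefficient $[x^n]F$ is a polynomial in $t$ supported on $k\le j\le n$, summing over all admissible $j$ amounts to evaluating at $t=1$. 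This gives
\begin{align*}
\sum_{j=k}^n j\stirfir{n}{j}_r\stirsec{j}{k}_r=\frac{n!}{k!}\,[x^n]\Bigl(\bigl(t\,\partial_t F(t,x)\bigr)\big|_{t=1}\Bigr).
\end{align*}

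Next I would compute the right-hand side explicitly. Writing $u=(1-x)^{-t}$ and $L=-\log(1-x)$, so that $F=(u-1)^k\,\e^{rL}u^r$ and $\partial_t u=Lu$, a direct differentiation yields $t\partial_t F=tL\,\e^{rL}u^r\bigl[k(u-1)^{k-1}u+r(u-1)^k\bigr]$. Setting $t=1$ turns $u$ into $(1-x)^{-1}$, $u-1$ into $x/(1-x)$ and $\e^{rL}u^r$ into $(1-x)^{-2r}$, so after simplification
\begin{align*}
\bigl(t\partial_t F\bigr)\big|_{t=1}=-\log(1-x)\,(1-x)^{-(k+2r)}\,x^{k-1}(k+rx).
\end{align*}
Extracting $[x^n]$ and absorbing the factor $x^{k-1}$ into a shift of the index then produces the intermediate form
\begin{align*}
\sum_{j=k}^n j\stirfir{n}{j}_r\stirsec{j}{k}_r=\frac{n!}{k!}\,[x^{n-k+1}]\Bigl(-\log(1-x)\,(1-x)^{-(k+2r)}(k+rx)\Bigr).
\end{align*}

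From this intermediate form I would obtain the two stated expressions by two substitutions. Formula~\eqref{ExpectationrLahfirst} follows from the elementary replacement $x\mapsto-x$: since $[x^{N}]g(x)=(-1)^{N}[x^{N}]g(-x)$ for $N=n-k+1$, this turns $1-x$ into $1+x$ and produces exactly the prefactor $(-1)^{n-k}$ together with the integrand $(1+x)^{-(k+2r)}\log(1+x)(k-rx)$. Formula~\eqref{ExpectationrLahsecond} is the more delicate one: I would write $[x^{N}](\,\cdot\,)=\operatorname{Res}_{x=0}x^{-N-1}(\,\cdot\,)$ and apply the change of variables $x=y/(1+y)$, which is a formal isomorphism fixing the origin. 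Under this substitution $1-x=(1+y)^{-1}$, $\diff x=(1+y)^{-2}\diff y$ and $x^{-N-1}=(1+y)^{N+1}y^{-N-1}$; the three powers of $1+y$ combine, using $N=n-k+1$, into $(1+y)^{(k+2r)+(N+1)-2}=(1+y)^{n+2r}$, while $-\log(1-x)=\log(1+y)$ and $k+rx=k+ry/(1+y)$, giving precisely the integrand of~\eqref{ExpectationrLahsecond}.

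The main obstacle is this last step: the two target integrands are not equal as functions (they carry the genuinely different exponents $-(k+2r)$ and $n+2r$), so the identity can hold only at the level of coefficients, and the crucial input is the change-of-variables formula for formal residues, $\operatorname{Res}_{x=0}\omega=\operatorname{Res}_{y=0}\phi^{*}\omega$ for a substitution $x=\phi(y)=y+O(y^2)$. It is exactly this substitution that mixes the extraction exponent $N+1$ with the exponent $k+2r$ of the function to manufacture the exponent $n+2r$, which is why no elementary functional manipulation can replace it. Everything else is routine bookkeeping.
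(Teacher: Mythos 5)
Your proposal is correct and follows essentially the same route as the paper: differentiate the generating function of Lemma~\ref{rstirexpgenproduct} in $t$, evaluate at $t=1$, strip off the factor $x^{k-1}$, then obtain \eqref{ExpectationrLahfirst} via $x\mapsto -x$ and \eqref{ExpectationrLahsecond} via the M\"obius substitution $1-x=(1+y)^{-1}$ in the Cauchy residue. The only cosmetic differences are that you use the Euler operator $t\,\partial_t$ rather than $\partial_t$ (identical once evaluated at $t=1$) and that you apply the residue change of variables directly to the $(1-x)$-form instead of to the already-reflected $(1+x)$-form.
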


\begin{proof}
Our starting point is the formula of Lemma~\ref{rstirexpgenproduct}.
The function $( (1-x)^{-t} -1)^{k} (1-x)^{-(rt+r)}$ is analytic in $(x,t)$ as a composition of analytic functions as long as $|x|<1$, $t\in \C$.   Therefore, we may differentiate its Taylor series any number of times in $x$ and $t$ and interchange the order of derivatives.
We differentiate in $t$ and set $t=1$:
$$
\sum_{n=k}^\infty \sum_{j=k}^n j \frac{k!}{n!}  \stirfir{n}{j}_r \stirsec{j}{k}_r x^n  = - \biggl(\frac{x}{1-x} \biggr)^{k-1} (1-x)^{-2r-1} (\log(1-x)) (k+rx).
$$
Dividing by $x^{k-1}$ and evaluating the coefficient of $x^{n-k+1}$, we get
$$
\sum_{j=k}^n j \stirfir{n}{j}_r \stirsec{j}{k}_r  = -\frac{n!}{k!} [x^{n-k+1}] (1-x)^{-(k+2r)} (\log(1-x))(k+rx).
$$
Changing $x$ to $(-x)$ on the right-hand side yields Equation~\eqref{ExpectationrLahfirst}
\begin{align*}
\sum_{j=k}^n j \stirfir{n}{j}_r \stirsec{j}{k}_r = (-1)^{n-k} \frac{n!}{k!} [x^{n-k+1}]  \bigl( 1+x \bigl)^{-(k+2r)} \log(1+x)(k-rx).
\end{align*}
To prove Equation~\eqref{ExpectationrLahsecond} we use Cauchy's integral formula giving
\begin{align*}
\sum_{j=k}^n j \stirfir{n}{j}_r \stirsec{j}{k}_r = (-1)^{n-k} \frac{n!}{k!} \frac{1}{2\pi i} \oint_{\gamma} \frac{\log(1+x)}{(1+x)^{k+2r}} \frac{(k-rx)}{x^{n-k+2}} \diff x.
\end{align*}
Here, $\gamma$ is a small counterclockwise circle centered at zero. Now, we substitute $1+x$ with $\frac{1}{1+y}$ to get
\begin{align*}
\sum_{j=k}^n j \stirfir{n}{j}_r \stirsec{j}{k}_r =  \frac{n!}{k!} \frac{1}{2\pi i} \oint_{\gamma '}  \frac{\log(1+y)(1+y)^{n+2r}}{y^{n-k+2}} \Biggr(k+r\frac{y}{1+y} \Biggr) \diff y.
\end{align*}
Using Cauchy's integral formula once more, we arrive at Equation (\ref{ExpectationrLahsecond}).
\end{proof}
\begin{proof}[Proof of Theorem~\ref{expectationrLah}]
We start with (\ref{ExpectationrLahsecond}) and realize that
\begin{multline*}
\frac{k!}{n!}\sum_{j=k}^n j \stirfir{n}{j}_r \stirsec{j}{k}_r
= k\bigl[x^{n-k+1}\bigr]\left((1+x)^{n+2r} \log(1+x)\right)
\\
 + r\bigl[x^{n-k}\bigr]\left((1+x)^{n+2r-1} \log(1+x)\right).
\end{multline*}
Using the Taylor series $\log(1+x) = \sum_{m=1}^\infty (-1)^{m-1} \frac{x^{m}}{m}$ and $(1+x)^\nu = \sum_{p=0}^\infty  \binom{\nu}{p} x^p$ which converge for $|x| < 1$
we write
\begin{alignat*}{3}
\log(1+x) (1+x)^\nu  =\sum_{z=1}^\infty & x^{z} \sum_{m=1}^{z} (-1)^{m-1} \frac{1}{m} \binom{\nu}{z-m}.
\end{alignat*}
Then,
\begin{alignat*}{3}
\frac{k!}{n!}\sum_{j=k}^n j \stirfir{n}{j}_r \stirsec{j}{k}_r
 =k \sum_{m=1}^{n-k+1} (-1)^{m-1} \frac{1}{m} \binom{n+2r}{n-k+1-m} + r \sum_{m=1}^{n-k} (-1)^{m-1} \frac{1}{m} \binom{n+2r-1}{n-k-m} \\
 =k \frac{n+2r}{n-(k-1)} \binom{n+2r-1}{k+2r-1} \sum_{m=k+2r}^{n+2r} \frac{1}{m} + r \binom{n+2r-1}{k+2r-1} \sum_{m=k+2r}^{n+2r-1} \frac{1}{m},
\end{alignat*}
where we used the identity
\begin{align*}
\sum\limits_{m=1}^z\binom \nu{z-m}\frac{(-1)^{m-1}}m=\binom \nu z\sum\limits_{m=\nu-z+1}^\nu\frac 1 m,
\end{align*}
which holds for all $z \in \N$ and real $\nu>z-1$. The latter condition is secured by $\max\{k,r\}>0$. For integer $\nu$, the identity can be shown by induction. For real $\nu$, it holds since both sides are polynomials in $\nu$.
Multiplying by $1/L(n,k)_r$ and applying \eqref{eq:Lah_r_def} gives us Theorem~\ref{expectationrLah}.
\end{proof}

\begin{satz}\label{AsymptoticsrLah}
Let $n \to \infty$ and $k = k(n) \in \lbrace 1 , \dotsc , n \rbrace$ be a function of $n$. \\
Let $r \geq 0$ be fixed. Then,
\begin{align*}
\E [\Lah(n,k)_r] \sim
\begin{cases}(k+r) \log\left( n/k \right), &\text{if }\quad k=o(n)\\
\frac{n \alpha \log\left( 1/\alpha \right)}{1- \alpha}, &\text{if } \quad k \sim \alpha n \text{ with } \alpha \in (0,1)\\
n, &\text{if } \quad k \sim n,
\end{cases}
\end{align*}
where we write $a_n \sim b_n$ if $a_n / b_n \to 1$ as $n \to \infty$.
\end{satz}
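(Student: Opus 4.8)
The plan is to read off all three asymptotics directly from the closed form~\eqref{ExpectrLahfirst}. Writing
\[
A_n \coloneqq \frac{k(n+2r)}{n-k+1}\bigl[H_{n+2r}-H_{k+2r-1}\bigr], \qquad B_n \coloneqq r\bigl[H_{n+2r-1}-H_{k+2r-1}\bigr],
\]
so that $\E[\Lah(n,k)_r]=A_n+B_n$, the whole problem reduces to estimating the harmonic difference $D_n \coloneqq H_{n+2r}-H_{k+2r-1}=\sum_{j=k+2r}^{n+2r}j^{-1}$ together with the rational prefactor $\frac{k(n+2r)}{n-k+1}$. The bracket in $B_n$ equals $D_n-1/(n+2r)$, and since $1/(n+2r)=o(1)$ I will treat both harmonic differences as $D_n$ throughout.

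For the first two regimes I would use the standard asymptotic $H_z=\log z+\gamma+O(1/z)$, extended to non-integer argument via $H_z=\psi(z+1)+\gamma$, which gives $D_n=\log\frac{n+2r}{k+2r-1}+O(1/k)=\log(n/k)+O(1)$. When $k=o(n)$ we have $\log(n/k)\to\infty$, so the additive $O(1)$ is negligible and $D_n\sim\log(n/k)$; simultaneously $\frac{k(n+2r)}{n-k+1}=k(1+o(1))$, whence $A_n\sim k\log(n/k)$ and $B_n\sim r\log(n/k)$. Adding and noting that $r\le k+r$ absorbs the two relative errors, $A_n+B_n=(k+r)(1+o(1))\log(n/k)$, which is the first case. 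When $k\sim\alpha n$ with $\alpha\in(0,1)$ we have $k\to\infty$, so $D_n\to\log(1/\alpha)$ while $\frac{k(n+2r)}{n-k+1}\sim\frac{\alpha n}{1-\alpha}$; hence $A_n\sim\frac{n\alpha\log(1/\alpha)}{1-\alpha}=\Theta(n)$, whereas $B_n\to r\log(1/\alpha)=O(1)$ is negligible, giving the middle case.

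The delicate regime, and the main obstacle, is $k\sim n$: here $D_n\to0$ while the prefactor $\frac{k(n+2r)}{n-k+1}$ blows up, so their product is an indeterminate form and the logarithmic estimate $D_n\approx\log(n/k)$ is too lossy. I would instead use the elementary two-sided bound
\[
\frac{n-k+1}{n+2r}\le D_n\le\frac{n-k+1}{k+2r},
\]
obtained by bounding each of the $n-k+1$ summands of $\sum_{j=k+2r}^{n+2r}j^{-1}$ between its smallest value $1/(n+2r)$ and its largest value $1/(k+2r)$. Since $k\sim n$ forces $k+2r\sim n+2r\sim n$, both bounds are $\sim(n-k+1)/n$, so $D_n\sim(n-k+1)/n$ by squeezing. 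Substituting, $A_n\sim\frac{k(n+2r)}{n-k+1}\cdot\frac{n-k+1}{n}=\frac{k(n+2r)}{n}\sim n$, while $B_n=O(D_n)=O((n-k+1)/n)=o(1)$ is again negligible, yielding $\E[\Lah(n,k)_r]\sim n$. The care needed is precisely to replace the logarithmic estimate by this harmonic sandwich so that the vanishing $D_n$ and the diverging prefactor combine to the correct constant, and to verify that the $r$-term $B_n$ and the various $+2r$ and $-1$ shifts never affect the leading order in any of the three regimes.
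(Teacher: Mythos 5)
Your argument is correct and takes essentially the same route as the paper's proof: both read the three asymptotics off the closed form of Theorem~\ref{expectationrLah}, using $H_m=\log m+O(1)$ in the first two regimes and the identical harmonic sandwich $\frac{n-k+1}{n+2r}\le H_{n+2r}-H_{k+2r-1}\le\frac{n-k+1}{k+2r}$ in the regime $k\sim n$. The only cosmetic difference is that the paper works from~\eqref{ExpectrLahsecond} rather than~\eqref{ExpectrLahfirst} in the first two cases.
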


Setting  $r=0$ we recover the formula obtained in Theorem 3.7 in \cite{kabluchko:2022}.

\begin{proof} First, let $k=o(n)$. Using the formula $H_{m+2r-1} = \log  m+ O(1)$ we get $H_{n+2r-1} - H_{k+2r-1} = \log (n/k) + O(1) \sim \log (n/k)$ and the result follows from~\eqref{ExpectrLahsecond} upon observing that $k(n+r) + r(n+1)\sim (k+r) n$.

Let now $k \sim \alpha n$ with $\alpha \in (0,1)$. Then, using the formula $H_{m+2r-1} = \log m + \gamma +o(1)$ as $m\to\infty$, we get $H_{n+2r-1} - H_{k+2r-1} \to \log (1/\alpha)$ and the claim follows from~\eqref{ExpectrLahsecond} after observing that $k(n+r) + r(n+1)\sim \alpha n^2$ and $n-(k-1)\sim (1-\alpha)n$.

For $k \sim n$, we use~\eqref{ExpectrLahfirst}. We notice that
\begin{align*}
H_{n+2r} - H_{k+2r-1} = \frac{1}{k+2r} + \dotso + \frac{1}{n+2r} \in \left[\frac{n-(k-1)}{n+2r},\frac{n-(k-1)}{k+2r} \right],
\end{align*}
which, by the sandwich lemma,  leads to
$$
\lim_{n\to\infty} [H_{n+2r} - H_{k+2r-1}]\frac{(n+2r)}{n-(k-1)}  = 1.
$$
Together with $H_{n+2r-1} - H_{k+2r-1}\to 0$ this yields the claim.
\end{proof}

Another way to derive the formula for the expectation is to adapt the proof of Corollary 2.2 in \citep{van_der_hofstad_etal_shortest_path_trees}. This paper  deals with a distribution which is similar to (but does not coincide with) the $0$-Lah distribution. This approach also allows us to derive a formula for the variance of the $r$-Lah distribution. To this end, we need to simplify the generating function of an $r$-Lah distributed random variable which is defined for complex $t$ by
\begin{align*}
P_{n,k,r}(t) \coloneqq \E \Bigl[t^{\Lah(n,k)_r}\Bigr] = \frac{1}{L(n,k)_r} \displaystyle \sum_{j=k}^n t^j \stirfir{n}{j}_r \stirsec{j}{k}_r.
\end{align*}

\begin{lem}\label{DarstellungPnkr}
For any admissible triple of parameters $(n,k,r)$ and all $t \in \C$ we have
\begin{alignat}{2}
P_{n,k,r}(t) &= \frac{1}{\binom{n+2r-1}{k+2r-1}} [x^n]\bigl((1-x)^{-t}-1\bigr)^k (1-x)^{-(rt+r)} \label{DarstellungPnkrfirst}\\
& =\frac{1}{\binom{n+2r-1}{k+2r-1}} \sum_{m=0}^k (-1)^{k-m} \binom{k}{m} \frac{\Gamma \Bigl( \bigl( r(t+1)+tm+n \bigr) \Bigr)}{\Gamma \Bigl( \bigl( r(t+1)+tm \bigr) \Bigr)n!}. \label{DarstellungPnkrsecond}
\end{alignat}
For $r=0$, the summand with $m=0$ should be interpreted as $0$.
\end{lem}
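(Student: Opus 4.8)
The plan is to prove the two equalities in Lemma~\ref{DarstellungPnkr} separately. For~\eqref{DarstellungPnkrfirst}, I would start from the defining formula $P_{n,k,r}(t) = \frac{1}{L(n,k)_r}\sum_{j=k}^n t^j \stirfir{n}{j}_r \stirsec{j}{k}_r$ and invoke Lemma~\ref{rstirexpgenproduct}, which already supplies the coefficient identity $\frac{k!}{n!}\stirfir{n}{j}_r\stirsec{j}{k}_r = [t^j][x^n]\bigl((1-x)^{-t}-1\bigr)^k(1-x)^{-(rt+r)}$. Equivalently, the double-sum expansion~\eqref{eq:generating_function_double_sum} gives $\sum_{j=k}^n t^j \frac{k!}{n!}\stirfir{n}{j}_r\stirsec{j}{k}_r = [x^n]\bigl((1-x)^{-t}-1\bigr)^k(1-x)^{-(rt+r)}$. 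Multiplying by $\frac{n!}{k!}$ and dividing by $L(n,k)_r = \binom{n+2r-1}{k+2r-1}\frac{n!}{k!}$ from~\eqref{eq:Lah_r_def} collapses the factorial factors and yields exactly the prefactor $1/\binom{n+2r-1}{k+2r-1}$, establishing~\eqref{DarstellungPnkrfirst}.

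For~\eqref{DarstellungPnkrsecond}, I would extract the coefficient $[x^n]$ from the closed form in~\eqref{DarstellungPnkrfirst} explicitly. First expand the outer power by the binomial theorem, writing $\bigl((1-x)^{-t}-1\bigr)^k = \sum_{m=0}^k \binom{k}{m}(-1)^{k-m}(1-x)^{-tm}$; multiplying by $(1-x)^{-(rt+r)}$ consolidates each term into $(1-x)^{-(tm+rt+r)}$. The generalized binomial series gives $[x^n](1-x)^{-\beta} = \binom{n+\beta-1}{n} = \frac{\Gamma(\beta+n)}{\Gamma(\beta)\,n!}$ for the exponent $\beta = tm+rt+r = r(t+1)+tm$, and assembling these term by term produces precisely the sum on the right-hand side of~\eqref{DarstellungPnkrsecond}. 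The only case needing care is $m=0$, where $\beta = r(t+1)$; when additionally $r=0$ the exponent vanishes and $(1-x)^0 = 1$ contributes nothing to $[x^n]$ for $n\geq 1$, which is exactly the stated convention that the $m=0$ summand be read as $0$ (consistent with the pole of $1/\Gamma$ being absent while $\Gamma(n)/\Gamma(0)=0$).

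The main technical point to handle carefully is the interpretation of $[x^n](1-x)^{-\beta}$ when $\beta$ is a complex number depending on $t$, rather than a nonnegative integer. Since $(1-x)^{-\beta} = \e^{-\beta\log(1-x)}$ is analytic in $x$ for $|x|<1$ and the coefficients are the entire functions $\beta \mapsto \frac{\Gamma(\beta+n)}{\Gamma(\beta)\,n!} = \frac{\beta(\beta+1)\cdots(\beta+n-1)}{n!}$ of $\beta$, the coefficient formula is valid for all $t\in\C$ by analytic continuation; no convergence obstruction arises because we are reading off a single Taylor coefficient of a genuinely analytic function. I would note that the finite sum over $m$ causes no issues, so the entire identity~\eqref{DarstellungPnkrsecond} holds for all $t\in\C$ as claimed, with the $m=0$, $r=0$ degeneracy being the sole boundary case requiring the stated convention.
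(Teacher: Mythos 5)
Your proposal is correct and follows essentially the same route as the paper: equation~\eqref{eq:generating_function_double_sum} (i.e.\ Lemma~\ref{rstirexpgenproduct}) plus the normalization $L(n,k)_r=\binom{n+2r-1}{k+2r-1}\frac{n!}{k!}$ for~\eqref{DarstellungPnkrfirst}, then the binomial expansion and the coefficient formula $[x^n](1-x)^{-\beta}=\Gamma(\beta+n)/(\Gamma(\beta)\,n!)$ for~\eqref{DarstellungPnkrsecond}. Your additional remarks on the entire dependence on $\beta$ and on the $m=0$, $r=0$ degeneracy are sound and slightly more explicit than the paper's treatment, but do not change the argument.
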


\begin{proof}
Recall from Lemma~\ref{rstirexpgenproduct} that
\begin{align*}
\sum_{j=k}^n  t^j \frac{k!}{n!} \stirfir{n}{j}_r \stirsec{j}{k}_r = [x^n]\bigl((1-x)^{-t}-1\bigr)^k (1-x)^{-(rt+r)}.
\end{align*}
Dividing this by $\binom{n+2r-1}{k+2r-1}$  gives us~\eqref{DarstellungPnkrfirst}.
To prove~\eqref{DarstellungPnkrsecond}, we apply the binomial formula to~\eqref{DarstellungPnkrfirst}:
\begin{alignat*}{2}
P_{n,k,r}(t)
= \frac{1}{\binom{n+2r-1}{k+2r-1}} [x^n] \sum_{m=0}^k \binom{k}{m}(-1)^{k-m} (1-x)^{-tm} (1-x)^{-(rt+r)} \\
= \frac{1}{\binom{n+2r-1}{k+2r-1}} \sum_{m=0}^k \binom{k}{m}(-1)^{k-m} [x^n] (1-x)^{-( r(t+1)+tm)}.
\end{alignat*}
To complete the proof of~\eqref{DarstellungPnkrsecond}, we use the Taylor expansion of the last term around zero
\begin{alignat*}{2}
&(1-x)^{-( r(t+1)+tm)} = 1 + \sum_{n=1}^{\infty} \frac{\Gamma \Bigl( \bigl( r(t+1)+tm+n \bigr) \Bigr)}{\Gamma \Bigl( \bigl( r(t+1)+tm \bigr) \Bigr)n!} x^n.
\end{alignat*}
\end{proof}
\begin{satz}\label{VariancerLah}
For any admissible triple of parameters $(n,k,r)$ the variance of an $r$-Lah distributed random variable $\Lah(n,k)_r$ is given by
\begin{alignat*}{2}
&\Var[\Lah(n,k)_r] = \frac{1}{(n-k+1)(n-k+2)} \\
&\biggl(  (r(n^2+3n+2)+k^2(n+r)+k(n^2+r+2nr)) (H_{n+2r}-H_{k+2r-1}) \\
&- (r^2(n^2+3n+2)+k^2(n+r)^2+k(2n^2r+r(2+r)+n(1+2r+2r^2)))(H_{n+2r}^{(2)}-H_{k+2r-1}^{(2)}) \\
&-\left(\frac{k(n+1)(n+2r)(k+2r-1)}{n-k+1}\right) (H_{n+2r}-H_{k+2r-1})^2 \biggr) -\frac{rn+r^2}{(n+2r)^2},
\end{alignat*}
where $H_{\alpha +m}^{(2)}-H_{\alpha}^{(2)} = \sum_{i=\alpha +1}^{\alpha +m} \frac 1 {i^2}$, with integer $m\in \N_0$ and $\alpha>-1$.
\end{satz}
\begin{proof}
We can adapt the proof of Corollary 2.2 of \cite{van_der_hofstad_etal_shortest_path_trees}. For $t \in \C$, Lemma~\ref{DarstellungPnkr} gives
\begin{align*}
P_{n,k,r}(t)  &= \frac{1}{\binom{n+2r-1}{k+2r-1}} \sum_{m=0}^k (-1)^{k-m} \binom{k}{m} \frac{\Gamma \Bigl( \bigl( r(t+1)+tm+n \bigr) \Bigr)}{\Gamma \Bigl( \bigl( r(t+1)+tm \bigr) \Bigr)n!} \\
&= \frac{1}{\binom{n+2r-1}{k+2r-1}n!}(-1)^k \partial_x^n[x^{n-1+r}x^{rt}(1-x^t)^k]_{x=1}.
\end{align*}
Taking the second derivative in $t$ at $t=1$ gives
\begin{align*}
\E[\Lah(n,k)^2_r-\Lah(n,k)_r ]
&=
\frac{1}{\binom{n+2r-1}{k+2r-1}n!}(-1)^k \partial_t^2 \partial_x^n[x^{n-1+r}x^{rt}(1-x^t)^k]_{x=t=1} \\
&=
\frac{1}{\binom{n+2r-1}{k+2r-1}n!} (-1)^k \partial_x^n [x^{n-1+r}h(x)],
\end{align*}
where $h(x)=(1 - x)^{k-2} x^r (r^2 (1-x)^2 - 2 k r (1-x) x + k x (k x-1))(\log x)^2$.
As $h$ is a sum of four terms of the form $(1-x)^j x^\beta (\log x)^2$, we can use the general Leibniz rule and the facts that
$\partial_x^i (1-x)^j |_{{x=1}}=j!(-1)^j\delta_{i,j}
$ and
$$
\partial_x^i[x^\beta(\log x)^2]_{x=1}=\frac{\beta!}{(\beta-i)!} \left( \left( \sum_{m=\beta-i+1}^\beta \frac{1}{i}\right)^2 - \sum_{m=\beta-i+1}^\beta \frac{1}{i^2}\right)
$$
(see~\cite[p.~913]{van_der_hofstad_etal_shortest_path_trees} for the latter formula) to arrive at the claim.
\end{proof}
For $r=0$, we get, after rearranging the terms,  a more concise formula.
\begin{kor}\label{VarianceLah}
The variance of a Lah distributed random variable with parameters $n \in \N$, $k\in \{1,\ldots, n\}$ (and $r=0$) is given by
\begin{alignat*}{2}
\Var[\Lah(n,k)_0]&= \frac{n+k}{n-k+2} \E [\Lah(n,k)_0] -\frac{(n+1)(k-1)}{nk(n-k+2)}\E[\Lah(n,k)_0]^2 \\
&-\frac{nk(nk+1)}{(n-k+1)(n-k+2)}(H_{n}^{(2)}-H_{k-1}^{(2)}).
\end{alignat*}
\end{kor}

The next theorem follows from the explicit formula for the variance derived in Theorem \ref{VariancerLah}  analogously to the proof of Theorem~\ref{AsymptoticsrLah}.
Its second  part generalizes Theorem 5.1 of \cite{kabluchko:2022}.
\begin{satz}
Let $n \to \infty$ and $k = k(n) \in \lbrace 1 , \dotsc , n \rbrace$ be a function of $n$. \\
Let $r \geq 0$ be fixed. Then,
\begin{align*}
\Var [\Lah(n,k)_r] \sim
\begin{cases}(k+r) \log\left( n/k \right), &\text{if } k=o(n),\\
-\left(\frac{\alpha}{1-\alpha}+\frac{\alpha(\alpha+1)\log \alpha}{(1-\alpha)^2}+ \frac{\alpha^2(\log\alpha)^2}{(1-\alpha)^3} \right) n, &\text{if }  k \sim \alpha n,\alpha \in (0,1).
\end{cases}
\end{align*}
\end{satz}


\subsection{Mod-Poisson convergence and central limit theorem}
The notion of mod-Poisson convergence has been introduced in \cite{kowalski:2010}, see also~\cite{barbour:2014,delbaen:2015,feray_modphi,jacod:2011,kowalski:2009,meliot:2014}, and provides a unified approach to various limit theorems such as local limit theorems and precise large deviations.

\begin{dfn}\label{DefinitionmodPoisson}
A sequence of random variables $(Z_n)_{n \in \N}$ converges in the  mod-Poisson sense with parameters $(\lambda_n)_{n\in \N}$, where $\lambda_n\to + \infty$, if the limit
\begin{align*}
\Psi(u) := \lim_{n \to \infty} \frac{\E \bigl[ \e^{u Z_n} \bigr]}{\e^{\lambda_n (\e^{u}-1)}}
\end{align*}
exists for all $u$ in some open set $\mathcal{D}\subseteq \C$ containing the real axis $\R$ and the convergence is uniform on compact subsets of $\mathcal D$.
\end{dfn}


The next theorem states that, for fixed $k,r$ and as $n\to\infty$,  the $r$-Lah distribution $\Lah(n,k)_r$ converges in the mod-Poisson sense  with parameters $\lambda_n = (k+r)\log n$.
\begin{satz}\label{rLahismodPoiss}
Let $k \in \N_0$ and $r \in [0, \infty)$ be fixed and such that $\max\{k,r\} > 0$. Then,
\begin{align*}
\lim_{n \to \infty} \frac{\E \Bigl[ \e^{z \Lah(n,k)_r} \Bigr]}{\e^{(k+r)(\log n)(\e^z-1)}} = \frac{\Gamma(k+2r)}{\Gamma((k+r)\e^z+r)}
\end{align*}
for every $z \in \mathcal{D}_{\Lah} \coloneqq \lbrace t \in \C | \cos(\Im t) > 0 \rbrace \supseteq \R$. This convergence is uniform as long as $z$ stays in any compact subset $K$ of $\mathcal{D}_{\Lah}$ and the speed of convergence is $O(n^{- \varepsilon (K)})$ for some $\varepsilon (K) > 0$.
\end{satz}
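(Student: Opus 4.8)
The plan is to start from the exact formula for the generating function $P_{n,k,r}(t)$ established in Lemma~\ref{DarstellungPnkr}, evaluate it at $t=\e^z$, and extract the leading asymptotics as $n\to\infty$ using Stirling-type asymptotics for the ratio of Gamma functions. Concretely, I set $t=\e^z$ in~\eqref{DarstellungPnkrsecond} and divide by the normalizing exponential $\e^{(k+r)(\log n)(\e^z-1)} = n^{(k+r)(\e^z-1)}$. The whole expression becomes a sum over $m\in\{0,\ldots,k\}$ of terms involving $\Gamma(r(t+1)+tm+n)/\Gamma(r(t+1)+tm)$, together with the prefactor $1/\binom{n+2r-1}{k+2r-1}$, which itself is a ratio of Gamma functions in $n$.

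The key analytic tool is the classical asymptotic expansion
\begin{align*}
\frac{\Gamma(n+a)}{\Gamma(n+b)} = n^{a-b}\left(1 + O(1/n)\right), \qquad n\to\infty,
\end{align*}
valid uniformly for $a,b$ in compact subsets of $\C$. First I would apply this to the prefactor to get $1/\binom{n+2r-1}{k+2r-1} \sim \Gamma(k+2r)\, n^{-(n+2r-1)+(k+2r-1)}\cdot(\text{lower order})$; more precisely I would write $\binom{n+2r-1}{k+2r-1} = \Gamma(n+2r)/(\Gamma(k+2r)\Gamma(n-k+1))$ and keep the exact Gamma factors. Then for each summand I would combine $\Gamma((k+r)\e^z+r m/\ldots)$-type factors with the $n$-dependent Gamma ratios. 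Writing $t=\e^z$, the exponent of $n$ contributed by the $m$-th term is $\bigl(r(t+1)+tm\bigr) - \bigl(\text{exponent from the prefactor}\bigr)$; the dominant contribution comes from $m=k$, where the power of $n$ matches the normalizing factor $n^{(k+r)(\e^z-1)}$ and the remaining terms decay like a negative power of $n$. Carrying out this matching shows that the $m=k$ term survives in the limit and produces exactly $\Gamma(k+2r)/\Gamma((k+r)\e^z+r)$, while the terms with $m<k$ contribute powers $n^{-(k-m)(\e^z-\ldots)}$, which vanish provided $\Re$ of the relevant exponent is positive.

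The main obstacle, and the point requiring genuine care, is the uniformity and the precise domain $\mathcal{D}_{\Lah} = \{z : \cos(\Im z) > 0\}$. The subdominant terms $m<k$ decay only if $\Re\bigl((k-m)(\e^z - 1)\bigr) > 0$ for the relevant range, and $\Re(\e^z) = \e^{\Re z}\cos(\Im z)$, so the decay of the error is governed by the sign of $\cos(\Im z)$; this is exactly where the condition $\cos(\Im z)>0$ enters, ensuring $\Re(\e^z)$ is bounded away from its boundary behavior and guaranteeing a uniform rate $O(n^{-\eps(K)})$ on compact subsets $K\subseteq\mathcal{D}_{\Lah}$. I would therefore need to make the $O(1/n)$ (and $O(n^{-(k-m)\delta})$) bounds in the Gamma-ratio expansion uniform in $z\in K$, which follows from the uniform version of Stirling's expansion once $r(t+1)+tm$ stays in a compact set as $z$ ranges over $K$. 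The degenerate case $r=0$ needs separate attention: then the $m=0$ term has $\Gamma(0)$ in the denominator and must be interpreted as $0$ (as noted after Lemma~\ref{DarstellungPnkr}), so the sum effectively runs from $m=1$, and the same matching argument gives the dominant $m=k$ term with limit $\Gamma(k)/\Gamma(k\e^z)$, consistent with the stated formula at $r=0$.
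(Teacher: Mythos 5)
Your proposal is correct and follows essentially the same route as the paper: evaluate \eqref{DarstellungPnkrsecond} at $t=\e^z$, apply the uniform Gamma-ratio asymptotic $\Gamma(\alpha+n)/\Gamma(\beta+n)=n^{\alpha-\beta}(1+O(1/n))$ to each of the finitely many summands and to the binomial prefactor, identify $m=k$ as the dominant term, and control the subdominant terms via $\Re(\e^z)=\e^{\Re z}\cos(\Im z)\geq\varepsilon(K)>0$ on compact subsets of $\mathcal{D}_{\Lah}$. Your handling of the $r=0$ degenerate summand and of the uniformity of the error also matches the paper's treatment.
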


\begin{proof}
Note that $\E [ \e^{z \Lah(n,k)_r} ] = P_{n,k,r}(\e^z)$.
Since the number of summands in~\eqref{DarstellungPnkrsecond} is fixed,
we investigate each term individually. We shall use that
\begin{align}\label{Gammaalphabeta}
\frac{\Gamma(\alpha + n)}{\Gamma(\beta + n)} = n^{\alpha - \beta}\bigl( 1+O(1/n) \bigr), \qquad \text{ as } n \to \infty,
\end{align}
where the remainder term is uniform as long as $\alpha$ and $\beta$ stay bounded; see~\cite{fields:1970}.
For $t \coloneqq \e^z$ and $m \in \lbrace 0,\dotsc,k \rbrace
$,  the $m$-th summand in~\eqref{DarstellungPnkrsecond} is
\begin{alignat*}{2}
(-1)^{k-m} \binom{k}{m} \frac{\Gamma \Bigl(  r(\e^z+1)+\e^zm+n \Bigr)}{\Gamma \Bigl(r(\e^z+1)+\e^zm \Bigr)n!}
= (-1)^{k-m} \binom{k}{m} \frac{n^{r(\e^z+1)+\e^zm -1} \bigl(1+O(1/n)\bigr)}{\Gamma \Bigl( r(\e^z+1)+\e^zm\Bigr)}
\end{alignat*}
by~\eqref{Gammaalphabeta} with $\alpha = r(\e^z+1)+\e^zm$ and $\beta = 1$.
If $z$ stays in a compact subset $K$ of $\mathcal{D}_{\Lah} \coloneqq \lbrace t \in \C | \cos(\Im t) > 0 \rbrace$, we can find $\varepsilon(K) \in (0,1)$ such that $\Re \e^z > \varepsilon(K) > 0$. Thus, the summand with $m=k$ dominates in the following sense:
\begin{align*}
P_{n,k,r}(\e^z)
&=
\frac{1}{\binom{n+2r-1}{k+2r-1}} \left( \frac{n^{r(\e^z+1)+\e^zk-1}}{\Gamma(r(\e^z+1)+\e^zk)}\bigl(1+O(1/n)\bigr) \right.
\\
&\qquad + \left. \sum_{m=0}^{k-1} (-1)^{k-m} \binom{k}{m} \frac{n^{r(\e^z+1)+\e^zm-1}}{\Gamma(r(\e^z+1)+\e^zm)} \bigl(1+O(1/n)\bigr) \right)\\
&=
\frac{\Gamma(k+2r)}{\Gamma\bigl((k+r)\e^z+r\bigr)}n^{(r+k)(\e^z-1)} \Bigl( 1+O\bigl(n^{-\varepsilon}\bigr) \Bigr).
\end{align*}
Since the choice of $\varepsilon$ only relies on $K$, the error term is uniform on $K$.
\end{proof}
The following result shows that the mod-Poisson convergence implies a central limit theorem (c.f.\ Proposition 2.4(2) in~\cite{kowalski:2010} and its proof).

\begin{satz}\label{modPoissgivesnormal}
Let $(Z_n)_{n \in \N}$ be a sequence of random variables converging in the mod-Poisson sense with parameters $\lambda_n$ such that
$
\lim_{n \to \infty} \lambda_n = \infty.
$
Then,
\begin{align*}
\frac{Z_n - \lambda_n}{\sqrt{\lambda_n}} \todistr \mathcal{N}(0,1).
\end{align*}
\end{satz}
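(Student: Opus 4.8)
The plan is to run the classical characteristic-function argument, reading off the limit directly from the mod-Poisson quotient evaluated at a shrinking argument. Write $W_n := (Z_n - \lambda_n)/\sqrt{\lambda_n}$; by L\'evy's continuity theorem it suffices to show $\E[\e^{\mathrm i t W_n}] \to \e^{-t^2/2}$ for every $t \in \R$. Fix such a $t$ and set $u_n := \mathrm i t/\sqrt{\lambda_n}$. Since $\lambda_n \to \infty$ we have $u_n \to 0$, and because $\mathcal{D}$ is open and contains $\R$ it contains a disc around $0$; hence $u_n \in \mathcal{D}$ for all large $n$. Denoting the mod-Poisson quotients by $\Psi_n(u) := \E[\e^{u Z_n}]/\e^{\lambda_n(\e^u-1)}$, so that $\Psi_n \to \Psi$ locally uniformly and $\E[\e^{u_n Z_n}] = \e^{\lambda_n(\e^{u_n}-1)}\Psi_n(u_n)$, I would then form
\[
\E\bigl[\e^{\mathrm i t W_n}\bigr] = \e^{-\mathrm i t\sqrt{\lambda_n}}\,\E\bigl[\e^{u_n Z_n}\bigr] = \e^{-\mathrm i t\sqrt{\lambda_n}}\,\e^{\lambda_n(\e^{u_n}-1)}\,\Psi_n(u_n).
\]

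The second step is the elementary asymptotics of the deterministic factor. Using $\e^{u_n}-1 = u_n + u_n^2/2 + O(u_n^3)$ with $u_n = \mathrm i t/\sqrt{\lambda_n}$ yields $\lambda_n(\e^{u_n}-1) = \mathrm i t\sqrt{\lambda_n} - t^2/2 + O(\lambda_n^{-1/2})$. The leading term $\mathrm i t\sqrt{\lambda_n}$ is cancelled exactly by the prefactor $\e^{-\mathrm i t\sqrt{\lambda_n}}$, so that $\e^{-\mathrm i t\sqrt{\lambda_n}}\,\e^{\lambda_n(\e^{u_n}-1)} \to \e^{-t^2/2}$, the characteristic function of $\mathcal{N}(0,1)$.

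The main obstacle is controlling the remaining factor $\Psi_n(u_n)$, where both the function and its argument move with $n$; this is precisely where the uniformity in Definition~\ref{DefinitionmodPoisson} is needed rather than mere pointwise convergence. First, $\Psi(0) = 1$ follows directly from the definition, since the quotient equals $1$ at $u=0$ for every $n$. Next, $\Psi$ is continuous at $0$, being a locally uniform limit of the continuous (indeed analytic) functions $\Psi_n$. I would then estimate $|\Psi_n(u_n) - 1| \le |\Psi_n(u_n) - \Psi(u_n)| + |\Psi(u_n) - \Psi(0)|$: the first term tends to $0$ by uniform convergence on a fixed closed disc around $0$ containing all $u_n$ for large $n$, and the second by continuity of $\Psi$ at $0$ together with $u_n \to 0$, giving $\Psi_n(u_n) \to 1$. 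Multiplying the three factors yields $\E[\e^{\mathrm i t W_n}] \to \e^{-t^2/2}$ for every $t \in \R$, and L\'evy's continuity theorem gives $W_n \todistr \mathcal{N}(0,1)$.
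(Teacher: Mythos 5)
Your proof is correct and is essentially the argument the paper relies on: the paper gives no proof of its own but defers to Proposition~2.4(2) of \cite{kowalski:2010}, whose proof is exactly this characteristic-function computation at $u_n = \mathrm{i}t/\sqrt{\lambda_n}$, with the Taylor expansion of $\lambda_n(\e^{u_n}-1)$ producing the Gaussian factor and the locally uniform convergence plus $\Psi(0)=1$ handling the quotient. Your treatment of the only delicate point --- that both $\Psi_n$ and its argument vary with $n$, so uniformity on a compact disc around $0$ inside $\mathcal{D}$ is genuinely needed --- is exactly right.
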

Theorems \ref{rLahismodPoiss} and \ref{modPoissgivesnormal} imply the following Central Limit Theorem for the $r$-Lah distribution.
\begin{satz}[Central Limit Theorem]
\label{thm:clt_lah_r}
Let $k \in \N_0$ and $r \in [0, \infty)$ be fixed and such that $\max\{k,r\} > 0$. Then,
\begin{align*}
\frac{\Lah(n,k)_r - (k+r)\log n}{\sqrt{(k+r)\log n}} \todistr \mathcal{N}(0,1).
\end{align*}
\end{satz}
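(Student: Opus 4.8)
The plan is to obtain this Central Limit Theorem as an immediate consequence of the two results that directly precede it, namely the mod-Poisson convergence of the $r$-Lah distribution (Theorem~\ref{rLahismodPoiss}) and the general implication that mod-Poisson convergence with diverging parameters yields asymptotic normality (Theorem~\ref{modPoissgivesnormal}). No new analytic estimate is required: the entire substance of the statement has already been absorbed into Theorem~\ref{rLahismodPoiss}, and what remains is a bookkeeping verification plus a citation.

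First I would record that, since the fixed parameters satisfy $k\in\N_0$, $r\in[0,\infty)$ and $\max\{k,r\}>0$, we have $k+r>0$, so that the mod-Poisson parameter $\lambda_n = (k+r)\log n$ diverges to $+\infty$ as $n\to\infty$. Next I would invoke Theorem~\ref{rLahismodPoiss}, which asserts precisely that the sequence $Z_n \coloneqq \Lah(n,k)_r$ converges in the mod-Poisson sense with these parameters $\lambda_n$ and limiting function $\Psi(z) = \Gamma(k+2r)/\Gamma((k+r)\e^z + r)$, the convergence being uniform on compact subsets of the open set $\mathcal D_{\Lah}\supseteq\R$. With these two facts in hand, Theorem~\ref{modPoissgivesnormal} applies verbatim with $Z_n = \Lah(n,k)_r$ and delivers
\[
\frac{\Lah(n,k)_r - (k+r)\log n}{\sqrt{(k+r)\log n}} \todistr \mathcal N(0,1),
\]
which is the assertion.

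There is therefore no genuine obstacle at this stage: all the difficulty has been front-loaded into the proof of the mod-Poisson convergence, where the uniformity on compacts around the real axis is the decisive feature. If one preferred a self-contained argument not citing Theorem~\ref{modPoissgivesnormal}, the mechanism to reproduce is the standard one: evaluate the limiting relation at the imaginary argument $z = \mathrm i \tau/\sqrt{\lambda_n}$ for fixed real $\tau$, so that the characteristic function of $(Z_n-\lambda_n)/\sqrt{\lambda_n}$ equals $\e^{-\mathrm i \tau\sqrt{\lambda_n}}\,\E[\e^{z Z_n}]$; using $\E[\e^{z Z_n}]\sim \Psi(z)\,\e^{\lambda_n(\e^z-1)}$ together with the expansion $\lambda_n(\e^z-1) = \mathrm i \tau\sqrt{\lambda_n} - \tau^2/2 + o(1)$ and the continuity of $\Psi$ at $0$ (where $\Psi(0)=1$), the imaginary exponent cancels and the characteristic function tends to $\e^{-\tau^2/2}$, whence Lévy's continuity theorem finishes the proof. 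The only point needing care in this alternative route is that the points $\mathrm i\tau/\sqrt{\lambda_n}$ tend to $0$ and hence eventually lie in a fixed compact subset of $\mathcal D_{\Lah}$, so that the uniform mod-Poisson convergence may legitimately be used there.
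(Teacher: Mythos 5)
Your proposal is correct and matches the paper's own argument exactly: the paper derives Theorem~\ref{thm:clt_lah_r} by combining the mod-Poisson convergence of $\Lah(n,k)_r$ with parameters $\lambda_n=(k+r)\log n\to\infty$ (Theorem~\ref{rLahismodPoiss}) with the general implication of Theorem~\ref{modPoissgivesnormal}. Your optional self-contained characteristic-function argument is also sound and is essentially the proof of the cited general result, but it is not needed here.
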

One can obtain an estimate $O(1/\sqrt {\log n})$ for the speed of convergence using the quasi-powers theorem of Hwang~\cite[Theorem IX.8]{flajolet_sedgewick_book} and even a complete Edgeworth expansion using~\cite[Theorem~3.2.2]{feray_modphi} or~\cite{kabluchko:2017}, but we do not need this in the following.

\subsection{Log-concavity of the \texorpdfstring{$r$}{r}-Lah distribution}
The main result of this section is the following
\begin{prop}\label{rLahislogconcave}
For any admissible triple of parameters $(n,k,r)$ the $r$-Lah distribution $\Lah(n,k)_r$ is log-concave, that is
\begin{align*}
\P[\Lah(n,k)_r =i]^2 \geq \P[\Lah(n,k)_r =i-1]\P[\Lah(n,k)_r =i+1]
\end{align*}
for all $i  \in \lbrace k+1, \dotsc ,n-1\rbrace$.
\end{prop}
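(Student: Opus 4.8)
The plan is to reduce the log-concavity of the distribution to the log-concavity of each of the two factors separately. Since $\P[\Lah(n,k)_r=j]=\stirfir{n}{j}_r\stirsec{j}{k}_r/L(n,k)_r$ by~\eqref{eq:Lah_distr_def}, and since all the relevant terms are strictly positive for $j\in\{k,\dots,n\}$ on an admissible triple, the normalizing constant $L(n,k)_r$ is irrelevant and it suffices to prove that the sequence $a_j:=\stirfir{n}{j}_r\stirsec{j}{k}_r$ satisfies $a_i^2\geq a_{i-1}a_{i+1}$ for $i\in\{k+1,\dots,n-1\}$. First I would record the elementary fact that the (entrywise) product of two positive log-concave sequences is log-concave: if $b_i^2\geq b_{i-1}b_{i+1}$ and $c_i^2\geq c_{i-1}c_{i+1}$ with all terms positive, then $(b_ic_i)^2\geq(b_{i-1}c_{i-1})(b_{i+1}c_{i+1})$. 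Thus everything reduces to the two claims below, one for each factor viewed as a sequence in the running index $j$.

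For the first factor $j\mapsto\stirfir{n}{j}_r$ I would use real-rootedness. The recurrence~\eqref{recurrence_stirling1} shows that $f_n(x):=\sum_j\stirfir{n}{j}_r x^j$ satisfies $f_n(x)=(x+n+r-1)f_{n-1}(x)$ with $f_0\equiv1$, so $f_n(x)=\prod_{i=0}^{n-1}(x+r+i)$ has only real, nonpositive roots for $r\geq0$, and in particular nonnegative coefficients. By Newton's inequalities the coefficients of a real-rooted polynomial with nonnegative coefficients form a log-concave sequence, giving $\stirfir{n}{i}_r^2\geq\stirfir{n}{i-1}_r\stirfir{n}{i+1}_r$.

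The second claim, which I expect to be the crux, is that $j\mapsto\stirsec{j}{k}_r$ is log-concave in the \emph{upper} index $j$ with the lower index $k$ fixed. The standard log-concavity of Stirling numbers of the second kind is in the lower index, so a single real-rooted polynomial is not available here. Instead I would compute the ordinary generating function in $j$: the recurrence~\eqref{recurrence_stirling2} gives $G_k(x):=\sum_j\stirsec{j}{k}_r x^j=\frac{x}{1-(k+r)x}\,G_{k-1}(x)$ with $G_0(x)=\sum_j r^j x^j=(1-rx)^{-1}$, whence $G_k(x)=x^k\prod_{i=0}^k(1-(r+i)x)^{-1}$. Expanding each factor as a geometric series, $G_k(x)=x^k\prod_{i=0}^k\bigl(\sum_{\ell\geq0}(r+i)^\ell x^\ell\bigr)$, exhibits the sequence $(\stirsec{j}{k}_r)_j$, up to the shift by $x^k$, as the convolution of the $k+1$ geometric sequences $\bigl((r+i)^\ell\bigr)_{\ell\geq0}$ for $i=0,\dots,k$. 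Each geometric sequence is log-concave with no internal zeros (for ratio $0$, when $r=0$ and $i=0$, it is the delta sequence $(1,0,0,\dots)$), and log-concavity with no internal zeros is preserved under convolution; therefore $(\stirsec{j}{k}_r)_j$ is log-concave.

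Combining the two claims through the product fact yields $a_i^2\geq a_{i-1}a_{i+1}$ for every $i\in\{k+1,\dots,n-1\}$, which is the assertion, after recalling $L(n,k)_r=\binom{n+2r-1}{k+2r-1}n!/k!>0$ from~\eqref{eq:Lah_r_def}. The only routine points needing care are the positivity bookkeeping at the boundary, namely that $\stirfir{n}{j}_r$ and $\stirsec{j}{k}_r$ are strictly positive at $j=i-1,i,i+1$ for admissible triples so that the product argument applies, and the precise citation for closure of log-concave (equivalently $\mathrm{PF}_2$) sequences under convolution. The genuine obstacle is the second claim: because the relevant index for $\stirsec{j}{k}_r$ is the nonstandard one, the generating-function/convolution route is what replaces the real-rootedness argument that is unavailable in that index.
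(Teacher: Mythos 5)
Your proposal is correct, and its skeleton coincides with the paper's: both factor $\P[\Lah(n,k)_r=j]\propto\stirfir{n}{j}_r\stirsec{j}{k}_r$, prove log-concavity of each factor in the running index $j$, and multiply the two inequalities; and both handle the first factor identically, via the real-rootedness of $\sum_j\stirfir{n}{j}_rx^j=(x+r)(x+r+1)\dotsm(x+r+n-1)$ and Newton's inequalities (Lemma~\ref{logconcaveStirlingfirst}). The divergence is exactly where you predicted the crux to be: the upper-index log-concavity of $j\mapsto\stirsec{j}{k}_r$. The paper (Lemma~\ref{lemstirsecforlogconcaverlah}) applies the recurrence~\eqref{recurrence_stirling2} twice to reduce the claim to the \emph{lower}-index strict log-concavity of $\bigl(\stirsec{j-1}{l}_r\bigr)_l$, which is in turn proved (Lemma~\ref{logconcaveStirlingsecond}) by a Rolle's-theorem induction showing that $B_{n,r}(x)=\sum_j\stirsec{n}{j}_rx^j$ is real-rooted. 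You instead compute the ordinary generating function $\sum_{j\ge k}\stirsec{j}{k}_rx^j=x^k\prod_{i=0}^{k}\bigl(1-(r+i)x\bigr)^{-1}$ (your derivation from~\eqref{recurrence_stirling2} and $\stirsec{j}{0}_r=r^j$ is valid for all real $r\ge0$) and invoke closure of log-concave sequences without internal zeros (equivalently $\mathrm{PF}_2$ sequences) under convolution. Your route is self-contained in the sense that it bypasses Lemma~\ref{logconcaveStirlingsecond} entirely and makes the upper-index log-concavity structurally transparent, at the cost of citing the convolution-closure theorem as a black box; the paper's route is more elementary (only the recurrence plus Newton) but needs the auxiliary lower-index lemma and yields the strictness statements and the monotone-ratio corollary as by-products, which your convolution argument does not immediately give (though strictness is not needed for the proposition as stated). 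The positivity bookkeeping you flag is indeed harmless: for an admissible triple all terms with $j\in\{k,\dotsc,n\}$ are nonnegative, which is all that multiplying the two inequalities requires.
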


To prove this proposition, we need to generalize the log-concavity of $r$-Stirling numbers that is known~\cite{mezo:2007} for $r \in \N$ to arbitrary $r \in [0,\infty)$.
A sequence $(a_k)_{k=0}^n$ is called strictly log-concave if $a_k^2 > a_{k+1}a_{k-1}$ for all $k \in \{1,\ldots, n-1\}$.

\begin{lem} \label{logconcaveStirlingfirst}
For $r \in [0,\infty)$ and $n\in \N$, the sequence
\begin{align*}
\left( \stirfir{n}{k}_{r}\right)_{k=0}^n
\end{align*}
is strictly log-concave.
\end{lem}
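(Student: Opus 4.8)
The plan is to pass from the combinatorial array to the \emph{horizontal generating polynomial}
\[
P_n(x) := \sum_{k=0}^n \stirfir{n}{k}_r x^k
\]
and to show that it has only real zeros; strict log-concavity will then follow from Newton's inequalities. First I would extract a product formula for $P_n$ from the recurrence \eqref{recurrence_stirling1}. Multiplying that recurrence by $x^k$ and summing over $k$ gives $P_n(x) = (x+n+r-1)P_{n-1}(x)$, and since $P_0(x) = \stirfir{0}{0}_r = 1$ this telescopes to
\[
P_n(x) = \prod_{j=0}^{n-1}(x+r+j) = (x+r)(x+r+1)\cdots(x+r+n-1).
\]
This identity is valid for every real $r$, because the recurrence holds for all $r \in \R$ and both sides are polynomials in $r$. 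In particular, for $r \in [0,\infty)$ the zeros of $P_n$ are the real, non-positive numbers $-r,-(r+1),\dots,-(r+n-1)$, so all coefficients $\stirfir{n}{k}_r$ are non-negative; they are strictly positive for $1 \le k \le n$, the only vanishing coefficient being $\stirfir{n}{0}_0 = 0$ in the case $r=0$.

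Next I would invoke Newton's inequalities: for a real polynomial $\sum_{k=0}^n a_k x^k$ with only real zeros one has
\[
a_k^2 \geq a_{k-1}\,a_{k+1}\,\frac{(k+1)(n-k+1)}{k(n-k)}, \qquad 1 \le k \le n-1.
\]
Applied to $P_n$ this reads
\[
\left(\stirfir{n}{k}_r\right)^2 \geq \frac{(k+1)(n-k+1)}{k(n-k)}\,\stirfir{n}{k-1}_r\,\stirfir{n}{k+1}_r .
\]
Since $(k+1)(n-k+1) - k(n-k) = n+1 > 0$, the prefactor strictly exceeds $1$ for every $1 \le k \le n-1$. Hence, whenever $\stirfir{n}{k-1}_r\,\stirfir{n}{k+1}_r > 0$, the inequality is strict, which is precisely the assertion of the lemma. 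It then remains only to dispose of the cases in which one of the two outer coefficients vanishes; by the positivity noted above this occurs solely for $r=0$, $k=1$, where the claim becomes $\bigl(\stirfir{n}{1}_0\bigr)^2 > 0$ and holds trivially since $\stirfir{n}{1}_0 > 0$.

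The essential content of the argument is the product formula, which exhibits $P_n$ as a product of linear factors and thereby makes real-rootedness immediate; this is exactly what renders the proof uniform in the continuous parameter $r$ and lets us bypass the integrality assumption of \cite{mezo:2007}. I do not anticipate a serious obstacle: the only points requiring care are the routine boundary bookkeeping just described and the verification that the Newton prefactor is $>1$, which together upgrade the (possibly non-strict) Newton inequality to the strict log-concavity demanded by the definition. Should one prefer to avoid citing Newton's inequalities, an alternative is a direct induction on $n$ using \eqref{recurrence_stirling1}, but the real-rootedness route is cleaner and I would present it as the main line of proof.
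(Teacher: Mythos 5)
Your proposal is correct and follows essentially the same route as the paper: both rest on the product formula $\sum_{k=0}^n \stirfir{n}{k}_r x^k = (x+r)(x+r+1)\dotsm(x+r+n-1)$ (the paper cites Broder and extends to real $r$ by polynomiality, you re-derive it by telescoping the recurrence), conclude real-rootedness, and apply Newton's inequality. Your explicit handling of the boundary case $r=0$, $k=1$ is a welcome extra detail but does not change the argument.
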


\begin{proof}
For $r\in \N_0$, the claim has been proven in~\cite[Theorem~2]{mezo:2007}. The proof is based on the formula
\begin{align*}
A_{n,r}(x):= \sum_{j=0}^n \stirfir{n}{j}_r x^j  = (x+r)(x+r+1)\dotsm(x+r+n-1)
\end{align*}
which can be found in~\cite[Corollary~9]{broder:1984} for $r\in  \N_0$ but stays true for all $r\geq 0$ since both sides are polynomials in $r$.
Thus, all roots of $A_{n,r}(x)$ are real and the claim follows from Newton's inequality; see, e.g.\ \cite[Theorem~1]{mezo:2007}.
\end{proof}

\begin{lem} \label{logconcaveStirlingsecond}
For $r \in [0,\infty)$ and $n\in \N$,  the sequence
\begin{align*}
\left( \stirsec{n}{k}_{r} \right)_{k=0}^n
\end{align*}
is strictly log-concave.
\end{lem}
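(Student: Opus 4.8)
The plan is to mirror the proof of Lemma~\ref{logconcaveStirlingfirst}: reduce strict log-concavity to the real-rootedness of the generating polynomial and then invoke Newton's inequality. I would set
\[
B_{n,r}(x) := \sum_{k=0}^n \stirsec{n}{k}_r x^k,
\]
the $r$-Bell polynomial. Unlike the first-kind polynomial $A_{n,r}$, this has no factorization into linear factors, so its real-rootedness has to be established directly. Once we know that $B_{n,r}$ has only real roots, Newton's inequality gives $\stirsec{n}{k}_r^2 \ge \stirsec{n}{k-1}_r \stirsec{n}{k+1}_r \cdot \frac{\binom nk^2}{\binom{n}{k-1}\binom{n}{k+1}}$, and since the ratio of binomial coefficients equals $1+\frac{n+1}{k(n-k)}>1$ for $1 \le k \le n-1$ while all involved $r$-Stirling numbers are positive (for $r>0$; for $r=0$ one checks the boundary case $k=1$, where $\stirsec{n}{1}=1>0$, directly), the strict inequality follows exactly as in Lemma~\ref{logconcaveStirlingfirst}.

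To prove real-rootedness I would first translate the recurrence~\eqref{recurrence_stirling2} into a differential recurrence. Multiplying~\eqref{recurrence_stirling2} by $x^k$ and summing over $k$, the factor $(k+r)$ produces the operator $x\frac{d}{dx}+r$ and the shifted index produces an extra factor $x$, yielding
\[
B_{n,r}(x) = x\,B_{n-1,r}'(x) + (x+r)\,B_{n-1,r}(x),
\]
the $r$-analogue of the classical recurrence for the Bell/Touchard polynomials.

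The core of the argument is an induction on $n$ showing, for $r>0$, that $B_{n,r}$ has $n$ distinct strictly negative roots. In the inductive step, let $t_1<\dots<t_{n-1}<0$ be the roots of $B_{n-1,r}$. The differential recurrence gives $B_{n,r}(t_i)=t_i\,B_{n-1,r}'(t_i)$; since the $t_i$ are simple the derivatives alternate in sign, and the factor $t_i<0$ makes $B_{n,r}(t_1),\dots,B_{n,r}(t_{n-1})$ alternate as well, forcing one root of $B_{n,r}$ in each interval $(t_i,t_{i+1})$, i.e.\ $n-2$ roots. Comparing the sign at $t_1$ with the behaviour as $x\to-\infty$ (leading coefficient $\stirsec{n}{n}_r=1>0$) produces one further root in $(-\infty,t_1)$, and comparing the sign at $t_{n-1}$ with $B_{n,r}(0)=\stirsec{n}{0}_r=r^{n}>0$ produces a last root in $(t_{n-1},0)$. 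This accounts for all $n$ roots of the degree-$n$ polynomial $B_{n,r}$, all simple and negative, so the induction closes (the base case $B_{1,r}(x)=x+r$ is immediate).

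The main obstacle is precisely this real-rootedness induction, and within it the bookkeeping that extracts exactly two roots beyond the $n-2$ interlacing ones; the value $B_{n,r}(0)=r^{n}>0$ is what pins down the rightmost root and keeps the induction going. The one point requiring separate treatment is $r=0$, where $0$ becomes a root of every $B_{n,0}$ (the roots are then only non-positive and the rightmost interval degenerates); here $B_{n,0}$ is the classical Bell polynomial, whose real-rootedness is well known, so I would either cite that or obtain it by continuity of the roots in $r$ from the case $r>0$. With real-rootedness available for all $r\ge 0$, Newton's inequality completes the proof.
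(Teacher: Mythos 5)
Your proposal is correct, and it follows the same overall strategy as the paper: reduce strict log-concavity to the real-rootedness of $B_{n,r}(x)=\sum_{k=0}^n\stirsec{n}{k}_r x^k$ via Newton's inequality, derive the same differential recurrence $B_{n,r}(x)=x\,B_{n-1,r}'(x)+(x+r)B_{n-1,r}(x)$ from~\eqref{recurrence_stirling2}, and induct on $n$ to show that $B_{n,r}$ has $n$ negative roots for $r>0$ (with $r=0$ handled by the known classical case). The two proofs differ only in how the induction step is executed. The paper rewrites the recurrence for $x<0$ as $B_{n,r}(x)=\frac{x}{\e^x(-x)^r}\frac{d}{dx}\bigl(\e^x(-x)^r B_{n-1,r}(x)\bigr)$; the integrating factor $(-x)^r$ plants an extra root at $0$, so Rolle's theorem immediately yields $n-1$ negative roots of $B_{n,r}$, and the $n$-th root is then forced to be real because complex roots of a real polynomial come in conjugate pairs. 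You instead evaluate $B_{n,r}(t_i)=t_i B_{n-1,r}'(t_i)$ at the simple roots of $B_{n-1,r}$ and count sign changes, which locates all $n$ roots directly (including the two extremal ones via the leading coefficient and $B_{n,r}(0)=r^n>0$) and gives simplicity and interlacing as a byproduct --- information the Rolle argument does not need but which your induction does, since you use simplicity of the $t_i$ in the next step. Your route is slightly longer in bookkeeping but avoids the fractional power $(-x)^r$ and the conjugate-pair argument; both are standard and fully rigorous. One small point to make explicit if you write it up: your sign-alternation step should record that $B_{n-1,r}'(t_i)$ has sign $(-1)^{n-1-i}$ because the leading coefficient $\stirsec{n-1}{n-1}_r=1$ is positive, which is what makes the multiplied sequence $t_iB_{n-1,r}'(t_i)$ alternate starting from a negative value at $t_{n-1}$.
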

\begin{proof}
For $r\in \N_0$, the statement has been proven by Mez\H{o}~\cite[Theorem 6]{mezo:2007}. To adapt his proof to arbitrary $r > 0$,  we consider the polynomial
\begin{align*}
B_{n,r}(x)  \coloneqq \sum_{j=0}^n \stirsec{n}{j}_r x^j
\end{align*}
and show that its roots are all real (and negative). Using the recurrence relation~\eqref{recurrence_stirling2} we can write
\begin{alignat}{2}
& B_{n,r}(x) &=&  \sum_{j=0}^n \left[ (j+r) \stirsec{n-1}{j}_{r} + \stirsec{n-1}{j-1}_{r} \right] x^j \nonumber \\
&            &=& x \left( \frac{d}{dx} B_{n-1,r}(x) + B_{n-1,r}(x) \right) + r B_{n-1,r}(x). \label{BnrxDiff}
\end{alignat}
If $x\geq 0$, then $B_{n,r}(x) \geq B_{n,r}(0) =  \stirsec{n}{0}_r = r^n >0$  for all $n\in \N$. Thus, let $x < 0$ from now on.
Then (\ref{BnrxDiff}) can be rewritten as
\begin{align}\label{Bnr(x)form}
B_{n,r}(x) = \frac{x}{\e^x(-x)^r} \frac{\diff}{dx}  \left(\e^x (-x)^r B_{n-1,r}(x) \right).
\end{align}
We continue with induction over $n$. The only root of $B_{1,r}(x) = x+r$ is real and negative since $r>0$. Assume that $B_{n-1,r}(x)$ has $n-1$  negative roots. Then, $\e^x (-x)^r B_{n-1,r}(x)$ has $n-1$ negative roots and a root at $x=0$. By Rolle's theorem, $\frac{d}{dx} \Big(\e^x (-x)^r B_{n-1,r}(x)\Big)$ has at least $n-1$ negative roots. Equation \eqref{Bnr(x)form}  implies that $B_{n,r}(x)$ has at least $n-1$ negative roots. The remaining $n$-th root of this polynomial must be real (and negative), since $B_{n,r}(x)$ has real (and positive) coefficients and its roots come in complex conjugate pairs.   This completes the induction. Newton's inequality then yields the claim.
\end{proof}
\begin{lem} \label{lemstirsecforlogconcaverlah}
For $k\in \N_0$, real $r\geq 0$ and integer $j > k$ it holds that
$$
\stirsec{j}{k}_{r}^2 \geq \stirsec{j-1}{k}_{r}\stirsec{j+1}{k}_{r}.
$$
Equality only holds for $k=0$, while strict inequality holds for all $k\geq 1$.
\end{lem}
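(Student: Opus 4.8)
The plan is to reduce the statement to a single clean fact: the log-concavity in the degree of the complete homogeneous symmetric polynomials evaluated at the nonnegative numbers $r,r+1,\dots,r+k$. The entry point is the ordinary generating function in the upper index. Summing the recurrence~\eqref{recurrence_stirling2} over $j$ (with the conventions $\stirsec{0}{0}_r=1$ and $\stirsec{j}{k}_r=0$ for $j<k$) shows that $G_k(x):=\sum_{j\ge0}\stirsec{j}{k}_r x^j$ satisfies $G_k(x)=\frac{x}{1-(k+r)x}G_{k-1}(x)$ with $G_0(x)=\frac{1}{1-rx}$, so that
\begin{align*}
\sum_{j\ge k}\stirsec{j}{k}_r x^j=\frac{x^k}{\prod_{i=0}^{k}\bigl(1-(r+i)x\bigr)}=x^k\prod_{i=0}^{k}\frac{1}{1-(r+i)x}.
\end{align*}
Since $\prod_{i=0}^{k}\frac{1}{1-y_i x}=\sum_{m\ge0}h_m(y_0,\dots,y_k)\,x^m$ generates the complete homogeneous symmetric polynomials $h_m$, reading off coefficients yields the representation $\stirsec{j}{k}_r=h_{j-k}(r,r+1,\dots,r+k)$. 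The asserted inequality is then precisely the log-concavity $h_m^2\ge h_{m-1}h_{m+1}$ (with $m=j-k\ge1$) of the sequence $\bigl(h_m(r,r+1,\dots,r+k)\bigr)_{m\ge0}$.

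I would then prove the self-contained claim that for \emph{any} nonnegative reals $y_0,\dots,y_N$ the sequence $\bigl(h_m(y_0,\dots,y_N)\bigr)_{m\ge0}$ is log-concave, by induction on the number of variables. Because $h_m$ is symmetric and $h_m(0,y_1,\dots)=h_m(y_1,\dots)$, I may discard any variable equal to $0$ and assume all variables positive, so every $h_m>0$. The base case of one variable is $h_m(y_0)=y_0^{\,m}$, which is log-linear. For the step, set $b_m:=h_m(y_0,\dots,y_{N-1})$ and $a_m:=h_m(y_0,\dots,y_N)$; comparing $\sum a_m x^m=\frac{1}{1-y_N x}\sum b_m x^m$ gives the one-step recurrence $a_m=y_N a_{m-1}+b_m$ (with $a_{-1}=0$), equivalently $a_m-y_Na_{m-1}=b_m$. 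This produces the key algebraic simplification
\begin{align*}
a_m^2-a_{m-1}a_{m+1}=a_m\bigl(a_m-y_Na_{m-1}\bigr)-a_{m-1}\bigl(a_{m+1}-y_Na_m\bigr)=a_mb_m-a_{m-1}b_{m+1},
\end{align*}
reducing everything to showing $a_mb_m\ge a_{m-1}b_{m+1}$.

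The latter is the heart of the argument. Expanding $a_m=\sum_{l=0}^{m}y_N^{\,m-l}b_l$ and $a_{m-1}=\sum_{l=0}^{m-1}y_N^{\,m-1-l}b_l$ and reindexing the second product to align the powers of $y_N$ gives
\begin{align*}
a_mb_m-a_{m-1}b_{m+1}=y_N^{\,m}\,b_0b_m+\sum_{l=1}^{m}y_N^{\,m-l}\bigl(b_lb_m-b_{l-1}b_{m+1}\bigr).
\end{align*}
By the inductive hypothesis $(b_l)$ is positive and log-concave, so the ratios $b_l/b_{l-1}$ are nonincreasing, whence $b_lb_m\ge b_{l-1}b_{m+1}$ for every $1\le l\le m$; thus every summand is nonnegative and the expression is $\ge0$, proving the weak inequality. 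For strictness I would retain the leading term $y_N^{\,m}b_0b_m=y_N^{\,m}\,b_m$, which is strictly positive once $y_N>0$ and $b_m>0$; hence $h_m^2>h_{m-1}h_{m+1}$ whenever at least two of $r,r+1,\dots,r+k$ are positive (in particular for every $k\ge1$ with $r>0$), while $k=0$ leaves a single variable and gives equality. The main obstacle is exactly this preservation step: since $\bigl(\stirsec{j}{k}_r\bigr)_j$ is an \emph{infinite} sequence, the real-rootedness/Newton argument used for the rows in Lemmas~\ref{logconcaveStirlingfirst} and~\ref{logconcaveStirlingsecond} is unavailable, and the work lies in verifying that convolving a positive log-concave sequence with the geometric sequence $(y_N^{\,m})$ preserves log-concavity (equivalently, that $\prod_i\frac{1}{1-y_i t}$ is a Pólya-frequency generating function); a minor care point is the boundary $r=0$, where one variable vanishes and must be dropped before the induction begins.
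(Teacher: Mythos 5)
Your argument is correct, but it follows a genuinely different route from the paper's. The paper's proof applies the recurrence~\eqref{recurrence_stirling2} twice to reduce the discriminant $\stirsec{j}{k}_{r}^2 - \stirsec{j-1}{k}_{r}\stirsec{j+1}{k}_{r}$ to the expression $\stirsec{j-1}{k-1}_{r}\stirsec{j-1}{k}_{r} + \stirsec{j-1}{k-1}_{r}^2 - \stirsec{j-1}{k}_{r}\stirsec{j-1}{k-2}_{r}$ and then invokes the strict log-concavity of the finite row sequence $(\stirsec{n}{k}_{r})_{k=0}^{n}$ from Lemma~\ref{logconcaveStirlingsecond}, which rests on real-rootedness and Newton's inequality; the cases $k=0,1$ are handled separately from the closed forms $r^j$ and $(r+1)^j-r^j$. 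You instead identify $\stirsec{j}{k}_{r}=h_{j-k}(r,r+1,\dots,r+k)$ via the ordinary generating function in the upper index (a correct derivation from the same recurrence) and prove from scratch, by induction on the number of variables, that convolving a positive log-concave sequence with a geometric one preserves log-concavity. Your route is self-contained — it bypasses Lemma~\ref{logconcaveStirlingsecond} and any real-rootedness input, and it yields the explicit lower bound $y_N^{m}b_m$ for the discriminant — at the price of proving a separate general lemma about complete homogeneous symmetric polynomials; the paper's route is shorter given that the row-wise result is established anyway. One point in your favour: your strictness criterion (at least two positive variables) is sharper than the lemma's own claim. For $k=1$ and $r=0$ one has $\stirsec{j}{1}_{0}=1$ for all $j\geq 1$, so equality holds there too — consistent with the paper's own computation, whose $k=1$ discriminant $(r+1)^{j-1}r^{j-1}$ vanishes at $r=0$ — so the assertion of strict inequality for \emph{all} $k\geq 1$ is not quite accurate at that boundary point, and your analysis detects this correctly. (This does not affect Proposition~\ref{rLahislogconcave}, which only needs the weak inequality.)
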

\begin{proof}
The cases $k=0$ and $k=1$ follow from the equations $\stirsec{j}{0}_{r} = r^j$ and $\stirsec{j}{1}_{r} = (r+1)^j-r^j$ which can be found in~\cite{nyul:2015}. For $k \geq 2$ we use~\eqref{recurrence_stirling2}:
\begin{alignat}{1}
&\stirsec{j}{k}_{r}\stirsec{j}{k}_{r} - \stirsec{j-1}{k}_{r}\stirsec{j+1}{k}_{r} \label{EqSeqStirSecdecres} \\
=& \stirsec{j}{k}_{r} \left[ (k+r)\stirsec{j-1}{k}_{r} + \stirsec{j-1}{k-1}_{r} \right] - \stirsec{j-1}{k}_{r} \left[ (k+r) \stirsec{j}{k}_{r} + \stirsec{j}{k-1}_{r} \right] \nonumber \\
=&\stirsec{j}{k}_{r}\stirsec{j-1}{k-1}_{r} - \stirsec{j-1}{k}_{r}\stirsec{j}{k-1}_{r}  \label{EqSeqStirsecincrea} \\
=&\left[ (k+r)\stirsec{j-1}{k}_{r} + \stirsec{j-1}{k-1}_{r} \right] \stirsec{j-1}{k-1}_{r} - \stirsec{j-1}{k}_{r} \left[ (k+r-1)\stirsec{j-1}{k-1}_{r} + \stirsec{j-1}{k-2}_{r} \right] \nonumber \\
=&\stirsec{j-1}{k-1}_{r}\stirsec{j-1}{k}_{r} + \stirsec{j-1}{k-1}_{r}^2  -\stirsec{j-1}{k}_{r}\stirsec{j-1}{k-2}_{r} > 0. \nonumber
\end{alignat}
The last statement is true because of Lemma \ref{logconcaveStirlingsecond}.
\end{proof}

Equations~\eqref{EqSeqStirSecdecres} and~\eqref{EqSeqStirsecincrea} yield the following corollary which, for $r=0$, can be found in Theorem~3.3 and Corollary~3.1 of~\cite{sibuya}.
\begin{kor}
For all $k \in \N$ and $r \in [0,\infty)$
the sequence $({\stirsec{j+1}{k}_{r}}/{\stirsec{j}{k}_{r}})_{j \geq k}$ is strictly decreasing, while the sequence $(\stirsec{j}{k+1}_{r}/{\stirsec{j}{k}_{r}})_{j\geq k}$ is strictly increasing.
\end{kor}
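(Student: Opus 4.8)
The plan is to obtain both monotonicity statements directly from the two quantities that Lemma~\ref{lemstirsecforlogconcaverlah} shows to be strictly positive, namely the expressions in~\eqref{EqSeqStirSecdecres} and~\eqref{EqSeqStirsecincrea}; no further combinatorics is needed, only the elementary passage from a positive ``cross difference'' to a monotone ratio by dividing through by an appropriate positive product. Concretely, for every $k\geq 1$, real $r\geq 0$ and integer $j>k$, that proof yields
\begin{align*}
\stirsec{j}{k}_{r}^2 > \stirsec{j-1}{k}_{r}\stirsec{j+1}{k}_{r}
\qquad\text{and}\qquad
\stirsec{j}{k}_{r}\stirsec{j-1}{k-1}_{r} > \stirsec{j-1}{k}_{r}\stirsec{j}{k-1}_{r},
\end{align*}
where the left inequality is the positivity of~\eqref{EqSeqStirSecdecres} and the right one the positivity of~\eqref{EqSeqStirsecincrea}. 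Throughout I will use that $\stirsec{j}{k}_{r}>0$ whenever $j\geq k\geq 1$ and $r\geq 0$ (indeed $\stirsec{k}{k}_{r}=1$), so that every denominator below is genuinely positive.

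For the decreasing statement I would take the left inequality with $j$ replaced by $j+1$ and divide by $\stirsec{j+1}{k}_{r}\stirsec{j}{k}_{r}$, which gives
\[
\frac{\stirsec{j+1}{k}_{r}}{\stirsec{j}{k}_{r}} > \frac{\stirsec{j+2}{k}_{r}}{\stirsec{j+1}{k}_{r}}
\]
for all $j\geq k$. This is exactly the assertion that consecutive terms of the sequence $(\stirsec{j+1}{k}_{r}/\stirsec{j}{k}_{r})_{j\geq k}$ strictly decrease.

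For the increasing statement I would apply the right inequality after the index shift $k\mapsto k+1$, $j\mapsto j+1$ (admissible for $j\geq k+1$, so that $j+1>k+1$), turning it into $\stirsec{j+1}{k+1}_{r}\stirsec{j}{k}_{r} > \stirsec{j}{k+1}_{r}\stirsec{j+1}{k}_{r}$. Dividing by the positive product $\stirsec{j+1}{k}_{r}\stirsec{j}{k}_{r}$ yields $\stirsec{j+1}{k+1}_{r}/\stirsec{j+1}{k}_{r} > \stirsec{j}{k+1}_{r}/\stirsec{j}{k}_{r}$, i.e.\ the ratios $\stirsec{j}{k+1}_{r}/\stirsec{j}{k}_{r}$ strictly increase. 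The one index not covered by the lemma's range $j>k$ is the left endpoint $j=k$, where the numerator $\stirsec{k}{k+1}_{r}=0$ vanishes; there I would simply check by hand that $\stirsec{k+1}{k+1}_{r}\stirsec{k}{k}_{r}=1>0=\stirsec{k}{k+1}_{r}\stirsec{k+1}{k}_{r}$, so the first step from $j=k$ to $j=k+1$ is strict as well.

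The main obstacle has, in effect, already been cleared in Lemma~\ref{lemstirsecforlogconcaverlah} (and, underlying it, in the real-rootedness of $B_{n,r}$ from Lemma~\ref{logconcaveStirlingsecond}); what remains here is purely bookkeeping. The only points demanding care are keeping every denominator a strictly positive $r$-Stirling number --- which pins down the hypotheses $k\geq 1$, $j\geq k$ --- and separately disposing of the boundary index $j=k$ in the increasing case, where one numerator is zero and the lemma's strict inequality does not literally apply.
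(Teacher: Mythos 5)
Your argument is correct and coincides with the paper's (essentially unwritten) proof: the corollary is stated right after Lemma~\ref{lemstirsecforlogconcaverlah} with the one-line remark that it follows from the positivity of \eqref{EqSeqStirSecdecres} and \eqref{EqSeqStirsecincrea}, and your division by positive products, the index shift $k\mapsto k+1$, $j\mapsto j+1$, and the separate check at the boundary $j=k$ are exactly the bookkeeping the authors leave implicit. One caveat is inherited from the lemma rather than introduced by you: for $k=1$ and $r=0$ one has $\stirsec{j}{1}_{0}=1$ for all $j\geq 1$, so $\stirsec{j}{1}_{r}^2-\stirsec{j-1}{1}_{r}\stirsec{j+1}{1}_{r}=r^{j-1}(r+1)^{j-1}$ vanishes and the first ratio sequence is constant rather than strictly decreasing; your derivation is valid wherever the lemma's strict inequalities actually hold.
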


\begin{proof}[Proof of Proposition \ref{rLahislogconcave}]
The claim follows by multiplying the  inequalities of Lemmas~\ref{logconcaveStirlingfirst} and~\ref{lemstirsecforlogconcaverlah}.
\end{proof}

The log-concavity of the $r$-Lah distribution yields the following

\begin{kor}[Unimodality]
For any admissible triple of parameters $(n,k,r)$ the $r$-Lah distribution is unimodal. That is, there exists $m_{n,k,r} \in \lbrace k,\dotsc,n \rbrace$ such that $i \mapsto \P[\Lah(n,k)_r=i]$ is nondecreasing for $i \leq m_{n,k,r}$ and nonincreasing for $i \geq m_{n,k,r}$.
\end{kor}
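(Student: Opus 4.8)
The plan is to read off unimodality directly from the log-concavity established in Proposition~\ref{rLahislogconcave}, via the classical principle that a strictly positive log-concave sequence has nonincreasing successive ratios, and such a sequence is automatically unimodal.

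First I would record the positivity that makes the argument run. Writing $p_i := \P[\Lah(n,k)_r=i] = \tfrac{1}{L(n,k)_r}\stirfir{n}{i}_r\stirsec{i}{k}_r$, I claim $p_i > 0$ for every $i \in \{k,\dotsc,n\}$. Indeed, for an admissible triple $\max\{k,r\}>0$: when $r>0$ both $r$-Stirling numbers are strictly positive on the whole triangle $0\le k\le i\le n$, and when $r=0$ admissibility forces $k\ge 1$, so $i\ge k\ge 1$ and both ordinary Stirling numbers $\stirfir{n}{i}$, $\stirsec{i}{k}$ are again positive. This strict positivity is precisely what allows me to divide and pass from products to ratios. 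Next I would set $q_i := p_{i+1}/p_i$ for $i\in\{k,\dotsc,n-1\}$ and note that the log-concavity inequality $p_i^2\ge p_{i-1}p_{i+1}$, valid for $i\in\{k+1,\dotsc,n-1\}$, rewrites as $q_{i-1}\ge q_i$; hence $(q_i)_{i=k}^{n-1}$ is nonincreasing.

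Finally I would extract the mode. Because $(q_i)$ is nonincreasing, the set $S:=\{i\in\{k,\dotsc,n-1\}: q_i\ge 1\}$ is an initial segment of $\{k,\dotsc,n-1\}$; write $S=\{k,\dotsc,m_{n,k,r}-1\}$, with the convention $m_{n,k,r}=k$ if $S=\emptyset$ and $m_{n,k,r}=n$ if $S$ exhausts $\{k,\dotsc,n-1\}$. Then for $i<m_{n,k,r}$ one has $q_i\ge 1$, i.e.\ $p_i\le p_{i+1}$, so $i\mapsto p_i$ is nondecreasing on $\{k,\dotsc,m_{n,k,r}\}$; and for $i\ge m_{n,k,r}$ one has $q_i<1$, i.e.\ $p_i>p_{i+1}$, so $i\mapsto p_i$ is nonincreasing on $\{m_{n,k,r},\dotsc,n\}$. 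This is exactly the asserted unimodality. There is no genuine obstacle here—it is the standard \emph{log-concave $\Rightarrow$ unimodal} implication—so the only points that deserve care are the strict positivity of $p_i$ across the entire support $\{k,\dotsc,n\}$ (without which the step $p_i^2\ge p_{i-1}p_{i+1}\Rightarrow q_{i-1}\ge q_i$ breaks down) and the bookkeeping of the two boundary cases in which the mode sits at the left endpoint $k$ or the right endpoint $n$.
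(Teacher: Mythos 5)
Your proof is correct and is exactly the argument the paper implicitly invokes: the corollary is stated as an immediate consequence of Proposition~\ref{rLahislogconcave} via the standard ``positive log-concave sequence $\Rightarrow$ nonincreasing ratios $\Rightarrow$ unimodal'' implication, which you have simply written out in full (including the positivity check and the endpoint bookkeeping that the paper leaves tacit). No discrepancies.
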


\subsection{Further limit theorems}
Fix $k \in \N_0$ and $r\geq 0$ such that $\max\{k,r\} > 0$.
To derive additional limit theorems for the $r$-Lah distribution we will verify conditions A1-A4 of~\cite{kabluchko:2017} with the parameters
\begin{alignat}{4}
& \L_n(j) = \P [\Lah(n,k)_r = j], & w_n = (k+r)\log n, \nonumber \\
& \beta_{\pm} = \pm \infty,      & \mathcal{D} = \{\beta \in \C: |\Im \beta|<\pi/2\} , \nonumber \\
&\varphi( \beta) = \e^\beta -1,    & W_{\infty}(\beta) = \Gamma(k+2r)/\Gamma((k+r)\e^\beta + r) \nonumber.
\end{alignat}
Assumption A1 is trivial, while A2 and A3  follow from Theorem~\ref{rLahismodPoiss}. It remains to check A4.  We recall from Lemma \ref{DarstellungPnkr} that
\begin{align*}
P_{n,k,r} \Bigl( \e^{\beta + iu} \Bigr)
&=
\E \biggl[ \e^{(\beta + iu)\Lah(n,k)_r} \biggr]\\
&= \frac{1}{\binom{n+2r-1}{k+2r-1}} \sum_{m=0}^k (-1)^{k-m} \binom{k}{m} \frac{\Gamma\Bigl(r\bigl(\e^{\beta + iu}+1\bigr) + \e^{\beta + iu}m + n\Bigr)}{\Gamma\Bigl(r\bigl(\e^{\beta + iu}+1\bigr) + \e^{\beta + iu}m\Bigr)n!}.
\end{align*}
Let $K \subset \R$ be an arbitrary compact set and $a\in (0,\pi)$. Uniformly over $\beta \in K$, we have
\begin{alignat*}{2}
     &n^{-(k+r)(\e^\beta - 1)} \int_a^\pi \biggl| P_{n,k,r} \Bigl( \e^{\beta + iu} \Bigr) \biggr| \diff u \\
\leq &C n^{-(k+r)(\e^\beta - 1)} \sum_{m=0}^k  \int_a^\pi \biggl| \frac{\Gamma\Bigl(r\bigl(\e^{\beta + iu}+1\bigr) + \e^{\beta + iu}m + n\Bigr)}{n^{k+2r-1}n!} \biggr| \diff u \\
\leq &C\sum_{m=0}^k  \int_a^\pi \biggl| n^{\e^{\beta + iu}(r+m) - (k+r)\e^{\beta}} \biggr| \diff u \leq C n^{-\delta}
\end{alignat*}
for some constants $C=C(K,k) > 0$ and $\delta = \delta(K,k,a)>0$.   We used that $1/\Gamma(z)$ is bounded on every compact subset of $\C$ and, in the last line, \eqref{Gammaalphabeta}.

Now we can use the results from \cite{kabluchko:2017} to prove  limit theorems for the $r$-Lah distribution.
\begin{satz}[Local limit theorem]
For every fixed $k \in \N_0$ and $r \in [0,\infty)$ such that $\max\{k,r\} > 0$ we have
\begin{align*}
\sqrt{\log n} \sup_{j \in \Z} \Biggl| \P \bigl[\Lah(n,k)_r = j\bigr] - \frac{\exp \Bigl\{- \frac{( j - (k+r)\log n )^2}{2(k+r)\log n} \Bigr\}}{\sqrt{2 \pi (k+r) \log n}}  \Biggr| \ton 0.
\end{align*}
\end{satz}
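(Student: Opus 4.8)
The plan is to obtain the statement as a direct specialization of the local limit theorem in the general framework of \cite{kabluchko:2017}, whose hypotheses have just been verified. With the identifications $\L_n(j) = \P[\Lah(n,k)_r = j]$, $w_n = (k+r)\log n$, $\varphi(\beta) = \e^\beta - 1$ and $W_\infty(\beta) = \Gamma(k+2r)/\Gamma((k+r)\e^\beta + r)$, Assumption A1 is trivial, Assumptions A2 and A3 are precisely the uniform mod-Poisson convergence of Theorem~\ref{rLahismodPoiss}, and Assumption A4 is the integral bound established above. Since $w_n = (k+r)\log n \to \infty$, all the conditions needed to invoke the lattice local limit theorem of \cite{kabluchko:2017} are in place.

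First I would read off the Gaussian parameters from the Poisson cumulant $\varphi(\beta) = \e^\beta - 1$. Because $\varphi'(0) = \varphi''(0) = 1$, the asymptotic mean and variance are both $w_n\varphi'(0) = w_n\varphi''(0) = (k+r)\log n$, so the centering and scaling in the approximating density match the normal law $\mathcal N\bigl((k+r)\log n,\, (k+r)\log n\bigr)$ evaluated at the integer point $j$. The correction factor $W_\infty$ enters only at lower order and does not affect the leading Gaussian term of the local limit theorem.

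Next I would reconcile the normalization. The theorem of \cite{kabluchko:2017} provides a bound of the form $\sqrt{w_n}\,\sup_{j \in \Z} | \L_n(j) - g_n(j) | \ton 0$, where $g_n$ denotes the Gaussian density above. Since $\sqrt{w_n} = \sqrt{k+r}\,\sqrt{\log n}$ and $\sqrt{k+r}$ is a fixed positive constant, replacing $\sqrt{w_n}$ by $\sqrt{\log n}$ leaves the vanishing of the limit unchanged, and the claimed formula follows at once.

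I expect the only point worth checking to be that $\Lah(n,k)_r$ is a lattice law of span one, so that the lattice version of the cited theorem applies with no periodicity correction. This holds because the distribution is supported on the consecutive integers $\{k, k+1, \ldots, n\}$ and, being log-concave by Proposition~\ref{rLahislogconcave}, assigns positive mass to adjacent values; the A4 estimate then controls exactly the high-frequency part of the characteristic function on the range $[a,\pi]$, which, together with the local Gaussian behavior near the origin supplied by A2 and A3, is the remaining ingredient the framework requires. With these observations the result is immediate.
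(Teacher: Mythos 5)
Your proposal is correct and follows essentially the same route as the paper: the paper verifies Assumptions A1--A4 of \cite{kabluchko:2017} in the preceding subsection (A1 trivially, A2 and A3 via Theorem~\ref{rLahismodPoiss}, A4 via the integral estimate) and then deduces the statement directly from Theorem~2.7 of \cite{kabluchko:2017}. Your additional remarks on the normalization $\sqrt{w_n}=\sqrt{k+r}\,\sqrt{\log n}$ and on the lattice span are accurate but not needed beyond what the cited framework already provides.
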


\begin{proof}
The statement follows directly from Theorem 2.7 in \cite{kabluchko:2017}.
\end{proof}

\begin{prop}[Location of the mode]\label{location_mode}
For all $r \in [0,\infty)$ and every fixed $k \in \N_0$ such that $\max\{k,r\} > 0$ there is $N \in \N$ such that for all integers $n > N$, all maximizers of the function $i \mapsto \P[\Lah(n,k)_r = i]$ are among the following two numbers:
\begin{align*}
\biggl\lfloor (k+r)\log n - \frac{(k+r)\Gamma'(k+2r)}{\Gamma(k+2r)} -\frac{1}{2} \biggr\rfloor , \biggl\lceil (k+r)\log n - \frac{(k+r)\Gamma'(k+2r)}{\Gamma(k+2r)} -\frac{1}{2} \biggr\rceil.
\end{align*}
\end{prop}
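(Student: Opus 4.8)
The plan is to deduce Proposition~\ref{location_mode} from the general theorem on the location of the mode under mod-$\phi$ convergence established in~\cite{kabluchko:2017}, exactly as the local limit theorem above was deduced from Theorem~2.7 there. Conditions A1--A4 have just been verified with $w_n = (k+r)\log n$, $\varphi(\beta) = \e^\beta - 1$ and $W_\infty(\beta) = \Gamma(k+2r)/\Gamma((k+r)\e^\beta + r)$, so the abstract result applies. That result asserts that, for all sufficiently large $n$, every maximizer of $j \mapsto \P[\Lah(n,k)_r = j]$ lies in $\{\lfloor m_n\rfloor, \lceil m_n\rceil\}$, where the abstract mode is
\[
m_n = w_n\,\varphi'(0) + \frac{W_\infty'(0)}{W_\infty(0)} - \frac12 .
\]
The remaining task is then purely computational: to evaluate $\varphi'(0)$ and $W_\infty'(0)/W_\infty(0)$ for our specific $\varphi$ and $W_\infty$.

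First I would note $\varphi'(\beta) = \e^\beta$, so $\varphi'(0) = 1$ and $w_n\varphi'(0) = (k+r)\log n$. Next I would compute the logarithmic derivative of $W_\infty$. Writing $\log W_\infty(\beta) = \log\Gamma(k+2r) - \log\Gamma((k+r)\e^\beta + r)$ and differentiating gives
\[
\frac{W_\infty'(\beta)}{W_\infty(\beta)} = -(k+r)\,\e^\beta\,\psi\bigl((k+r)\e^\beta + r\bigr), \qquad \psi := \Gamma'/\Gamma ,
\]
so that at $\beta = 0$ one obtains $W_\infty'(0)/W_\infty(0) = -(k+r)\psi(k+2r) = -(k+r)\Gamma'(k+2r)/\Gamma(k+2r)$. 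Substituting both values into the formula for $m_n$ reproduces exactly the expression in the statement. (Here $\Gamma(k+2r)$ is finite and nonzero because $k+2r > 0$ under the admissibility hypothesis $\max\{k,r\} > 0$.)

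Finally I would confirm that the hypotheses of the cited mode theorem are genuinely in force. The key structural input is that $\Lah(n,k)_r$ is log-concave, hence unimodal (Proposition~\ref{rLahislogconcave}); this guarantees that the maximizers are at most two consecutive integers, so locating them amounts to controlling where the ratio $\P[\Lah(n,k)_r = j+1]/\P[\Lah(n,k)_r = j]$ crosses $1$. One also needs $m_n$ to lie eventually in the interior of the support $\{k,\dots,n\}$, which holds since $m_n \sim (k+r)\log n$ forces $k < m_n < n$ for all large $n$. \emph{The main obstacle is not in these routine steps but is already resolved inside~\cite{kabluchko:2017}}: the delicate content is the saddle-point (Edgeworth-type) refinement of the Fourier inversion formula that produces the half-integer shift $-\tfrac12$, distinguishing the true discrete mode from the continuous centre $w_n\varphi'(0) + W_\infty'(0)/W_\infty(0)$. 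Having verified A1--A4, I can import this shift directly from the general theorem rather than re-derive it, so the proof reduces to the digamma computation above.
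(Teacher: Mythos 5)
Your proposal is correct and follows exactly the paper's route: the paper likewise deduces the statement from the general mode-location theorem of~\cite{kabluchko:2017} (Theorem~2.11 and Remark~2.12 there), relying on the conditions A1--A4 already verified with $w_n=(k+r)\log n$, $\varphi(\beta)=\e^\beta-1$ and $W_\infty(\beta)=\Gamma(k+2r)/\Gamma((k+r)\e^\beta+r)$. Your explicit digamma computation of $W_\infty'(0)/W_\infty(0)=-(k+r)\Gamma'(k+2r)/\Gamma(k+2r)$ is the (correct) step the paper leaves implicit.
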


\begin{proof}
The claim follows from Theorem 2.11 and Remark 2.12 in \cite{kabluchko:2017}.
\end{proof}

\begin{satz}[Precise asymptotics of large deviations]\label{theo:LDP}
Let $k \in \N_0$ and $r \in [0,\infty)$ be such that $\max\{k,r\} > 0$ and let $(x_n)_{n \in \N}$ be a sequence of real numbers converging to $x > 0$, such that $(k+r)x_n \log n$ is integer for all $n \in \N$. Then, we have the asymptotics
\begin{align*}
\P \bigl[ \Lah(n,k)_r    = (k+r)x_n\log n \bigr]
&\sim
\frac{n^{-(k+r)\bigl(x_n \log(x_n) -x_n +1\bigr)}}{\sqrt{2 \pi (k+r) x \log n}} \frac{\Gamma(k+2r)}{\Gamma\bigl((k+r)x+r\bigr)}, \\
\P\bigl[\Lah(n,k)_r \geq (k+r)x_n\log n\bigr]
&\sim
\frac{x}{x-1} \frac{n^{-(k+r)\bigl(x_n \log(x_n) -x_n +1\bigr)}}{\sqrt{2 \pi (k+r) x \log n}} \frac{\Gamma(k+2r)}{\Gamma\bigl((k+r)x+r\bigr)}, \\
\P\bigl[\Lah(n,k)_r \leq (k+r)x_n\log n\bigr]
&\sim
\frac{1}{1-x} \frac{n^{-(k+r)\bigl(x_n \log(x_n) -x_n +1\bigr)}}{\sqrt{2 \pi (k+r) x \log n}} \frac{\Gamma(k+2r)}{\Gamma\bigl((k+r)x+r\bigr)}.
\end{align*}
The second statement only holds for $x >1$, while the third only holds for $x<1$.
\end{satz}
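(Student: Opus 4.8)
The plan is to obtain all three asymptotics as a direct application of the precise large deviation results of \cite{kabluchko:2017}, in exactly the way the preceding local limit theorem (Theorem~2.7 there) and mode location (Theorem~2.11 and Remark~2.12 there) were deduced. The hypotheses A1–A4 needed for that machinery have already been verified above for the data $w_n = (k+r)\log n$, $\varphi(\beta) = \e^\beta - 1$, $W_\infty(\beta) = \Gamma(k+2r)/\Gamma((k+r)\e^\beta + r)$, and $\mathcal{D} = \{\beta \in \C : |\Im \beta| < \pi/2\}$; in particular the oscillatory-integral bound A4, which is the only laborious ingredient, is in hand. Hence the theorem follows by invoking the corresponding precise-deviation statement of \cite{kabluchko:2017} and specializing its output. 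What remains is therefore to identify the explicit constants it produces.

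The deviation regime $\Lah(n,k)_r \approx (k+r)x\log n = x\,w_n$ corresponds, in the exponential-tilting picture underlying the proof, to the saddle point $\beta = \beta(x)$ obtained by matching the tilted Poisson mean to the target value, i.e.\ $\e^{\beta} = x$, so $\beta = \log x$. First I would record the quantities on which the general formulas depend: the Poisson Laplace exponent and its derivatives, $\varphi(\log x) = x-1$ and $\varphi'(\log x) = \varphi''(\log x) = x$; the Legendre (Cram\'er) rate function of the Poisson law, $I(x) = x\log x - x + 1$; and the value $W_\infty(\log x) = \Gamma(k+2r)/\Gamma((k+r)x + r)$ of the limiting function. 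Substituting these into the general precise-deviation formula reproduces the exponential factor $\e^{-w_n I(x_n)} = n^{-(k+r)(x_n\log x_n - x_n + 1)}$, the Gaussian normalization $\sqrt{2\pi\, w_n\, \varphi''(\beta)} = \sqrt{2\pi (k+r)x\log n}$, and the constant $\Gamma(k+2r)/\Gamma((k+r)x+r)$, which together yield the first (pointwise) assertion.

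For the two tail estimates I would sum the pointwise asymptotics over the relevant range, using that near the saddle the ratio of successive point masses tends to $\e^{-\beta} = 1/x$. When $x > 1$ this ratio is $<1$, so the upper tail $\P[\Lah(n,k)_r \geq (k+r)x_n\log n]$ is asymptotic to the leading term times the geometric sum $\sum_{i\geq 0} x^{-i} = x/(x-1)$; when $x < 1$ the downward ratio is $x < 1$, and the lower tail picks up the factor $\sum_{i\geq 0} x^{i} = 1/(1-x)$. These dominated-geometric-series arguments are precisely the ones packaged into the tail statements of \cite{kabluchko:2017}, so in practice both tails follow from the same theorem as the pointwise estimate.

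The only point requiring care—rather than a genuine obstacle, since A4 is already established—is the bookkeeping between the lattice-corrected sequence $x_n$, which must appear in the exponent so that $(k+r)x_n\log n$ is a genuine integer at which the point mass is evaluated, and its limit $x$, which may be used throughout the subexponential prefactor and in the geometric tail sums; one checks that replacing $x_n$ by $x$ in those smooth factors perturbs them only by a factor $1+o(1)$. Matching the normalization conventions of \cite{kabluchko:2017} to the parameters $(w_n,\varphi,W_\infty)$ fixed above is the remaining routine item.
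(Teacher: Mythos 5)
Your proposal is correct and follows essentially the same route as the paper: the paper's proof simply invokes Remark~2.9 of \cite{kabluchko:2017} for the pointwise asymptotics and Theorem~3.2.2 of \cite{feray_modphi} (equivalently, the argument of Theorem~2.8 in \cite{kabluchko:2017}) for the two tails, relying on the A1--A4 verification carried out just before the theorem. Your identification of the saddle point $\beta=\log x$, the rate function $x\log x-x+1$, the variance factor $\varphi''(\log x)=x$, the constant $W_\infty(\log x)$, and the geometric-series origin of the factors $x/(x-1)$ and $1/(1-x)$ is exactly the content those cited results supply.
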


\begin{proof}
The first statement follows from Remark 2.9 in \cite{kabluchko:2017}, with $K \subset \R$ being an arbitrary compact segment which contains $x$ and $\log(x_n)$. For the other two asymptotics, c.f.\ Theorem~3.2.2 in~\cite{feray_modphi} (or repeat the proof of Theorem~2.8 in \cite{kabluchko:2017} starting with the second formula of Lemma~3.2.1 of~\cite{feray_modphi}).
\end{proof}

\section{A threshold phenomenon for face numbers of cones generated by random walks}\label{threshold_phenomena}
We will use the results from the previous section to prove weak and strong threshold phenomena for the expected number of $k$-faces of the cone $C_n^B$ that has been defined in the introduction. As already stated, the following equation holds for all $k\in \{0,\ldots, d-1\}$ and $n\geq d$:
\begin{align*}
\E \Bigl[ f_k \bigl( C_n^B \bigr) \Bigr] = \frac{2\cdot k!}{n!} \sum_{l=0}^\infty \stirfir{n}{d-2l-1}_{\frac{1}{2}}\stirsec{d-2l-1}{k}_{\frac{1}{2}},
\end{align*}
see \cite[Corollary~2.12]{godland:2022} and~\cite[Equation~(2.23)]{godlandschlaefli:2022}.
We can represent the right-hand side in terms of the $r$-Lah distribution with $r=1/2$
\begin{alignat}{2}
&\frac{\E\bigl[ f_{k}(C_{n}^B) \bigr]}{\binom{n}{k}} &=&2 \frac{k!}{\binom{n}{k}n!} \sum_{l=0}^\infty \stirfir{n}{d-2l-1}_{\frac{1}{2}}\stirsec{d-2l-1}{k}_{\frac{1}{2}}\nonumber \\
&&=&2 \P\left[\Lah(n,k)_{\frac{1}{2}} \in \lbrace d-1,d-3, \dotsc \rbrace\right]. \label{Equation_d-1}
\end{alignat}
Since it holds that
\begin{itemize}
\item $\displaystyle \sum_{j=k}^{n} (-1)^{n-j}\stirfir{n}{j}_{r}\stirsec{j}{k}_{r} = 0$ if $n>k$ (c.f.\ Theorem 5 in \cite{broder:1984}),
\item $\displaystyle\sum_{j=k}^{n} \stirfir{n}{j}_{r}\stirsec{j}{k}_{r} = L(n,k)_r$,
\end{itemize}
we conclude that,  for $n>k$,
\begin{align}\label{evenodd}
\P[\Lah(n,k)_r \text{ takes even value}] = \P[\Lah(n,k)_r \text{ takes odd value}] = \frac{1}{2}.
\end{align}
This allows us to rewrite (\ref{Equation_d-1}) as
\begin{align}  \label{Complement_Equation_d-1}
1 - \frac{\E\bigl[ f_{k}(C_{n}^B)\bigr]}{\binom{n}{k}} = 2 \P\left[\Lah(n,k)_{\frac{1}{2}} \in \lbrace d+1,d+3, \dotsc \rbrace\right].
\end{align}
These representations will be used to prove threshold phenomena; c.f.\ \cite[Theorems~6.2, 6.3]{kabluchko:2022} for corresponding  results in the case $r=0$.

\begin{satz}[Weak Threshold]\label{Treshold}
Let $d \to \infty$ and $n=n(d)\geq d$ be a function of $d$ such that
\begin{align*}
\gamma \coloneqq \lim_{d \to \infty} \frac{\log\bigl(n(d)\bigr)}{d} \in [0,\infty].
\end{align*}
Then, for every fixed $k \in \N_0$, we have
\begin{align*}
\lim_{d \to \infty} \frac{\E\bigl[ f_{k}(C_{n}^B) \bigr]}{\binom{n}{k}} = \begin{cases}1, &\text{if }\quad \gamma < \frac{1}{k+\frac{1}{2}},\\
0, &\text{if } \quad \gamma > \frac{1}{k+\frac{1}{2}}.
\end{cases}
\end{align*}
In the critical case  $\gamma = \frac{1}{k+\frac{1}{2}}$, if
\begin{align}\label{eq:log_n_critical_case}
\log\bigl( n(d) \bigr) = \biggl(\frac{1}{k+\frac{1}{2}}\biggr)\biggl( d+ c \sqrt{d} + o\Bigl( \sqrt{d} \Bigr) \biggr)
\end{align}
for  some constant $c\in\R$, then
\begin{align*}
\lim_{d \to \infty} \frac{\E\bigl[ f_{k}(C_{n}^B) \bigr]}{\binom{n}{k}} = \frac{1}{\sqrt{2\pi}} \int_{-\infty}^{-c} \e^{-x^2/2} \diff x.
\end{align*}
\end{satz}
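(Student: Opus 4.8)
The plan is to reduce all three regimes to the limit theorems for $\Lah(n,k)_{1/2}$ established above. Write $w_n := (k+\tfrac12)\log n$ and $\sigma_n := \sqrt{w_n}$, and set $X_d := \Lah\bigl(n(d),k\bigr)_{1/2}$. Since $n=n(d)\ge d\to\infty$ while $k$ is fixed and $r=\tfrac12$, the triple $\bigl(n,k,\tfrac12\bigr)$ is admissible, so by the central limit theorem (Theorem~\ref{thm:clt_lah_r}) we have $(X_d-w_n)/\sigma_n\todistr\mathcal N(0,1)$; let $\Phi$ denote the standard normal distribution function. For $d$ large enough that $n>k$, the representations~\eqref{Equation_d-1} and~\eqref{Complement_Equation_d-1} read
\[
\frac{\E[f_k(C_n^B)]}{\binom nk}=2\,\P\bigl[X_d\in\{d-1,d-3,\dots\}\bigr],\qquad 1-\frac{\E[f_k(C_n^B)]}{\binom nk}=2\,\P\bigl[X_d\in\{d+1,d+3,\dots\}\bigr],
\]
and these are the only inputs I would use beyond the limit theorems.

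\textbf{Subcritical and supercritical regimes.} First I would bound each parity sum by a one-sided tail. If $\gamma<\tfrac{1}{k+1/2}$, then $w_n\sim(k+\tfrac12)\gamma\,d$ with $(k+\tfrac12)\gamma<1$ (the case $\gamma=0$ being even easier, as $d/\sigma_n\to\infty$), whence $(d+1-w_n)/\sigma_n\to+\infty$. Since $1-\E[f_k(C_n^B)]/\binom nk\le 2\,\P[X_d\ge d+1]$ and the standardized threshold tends to $+\infty$, the CLT forces $\P[X_d\ge d+1]\to0$: for each fixed $M$ the threshold eventually exceeds $M$, so $\limsup_d\P[X_d\ge d+1]\le 1-\Phi(M)$, and letting $M\to\infty$ yields $0$. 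Hence $\E[f_k(C_n^B)]/\binom nk\to1$. The supercritical case $\gamma>\tfrac{1}{k+1/2}$ (including $\gamma=\infty$) is symmetric: now $(d-1-w_n)/\sigma_n\to-\infty$, so $\E[f_k(C_n^B)]/\binom nk\le 2\,\P[X_d\le d-1]\to0$.

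\textbf{Critical regime.} Here $w_n=(k+\tfrac12)\log n=d+c\sqrt d+o(\sqrt d)$, so $\sigma_n\sim\sqrt d$ and $(d-1-w_n)/\sigma_n\to-c$. Put $A:=\P[X_d\in\{d-1,d-3,\dots\}]$ and $\tilde A:=\P[X_d\in\{d-2,d-4,\dots\}]$, so that $A+\tilde A=\P[X_d\le d-1]\to\Phi(-c)$ by the CLT, while $\E[f_k(C_n^B)]/\binom nk=2A=(A+\tilde A)+(A-\tilde A)$. Thus it remains to prove $A-\tilde A\to0$. I would write $A-\tilde A=\sum_{j\le d-1}(-1)^{d-1-j}\,\P[X_d=j]$, an alternating sum of the unimodal sequence $j\mapsto\P[X_d=j]$ (unimodal by the log-concavity in Proposition~\ref{rLahislogconcave}). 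Splitting the summation range at the mode and applying the elementary estimate that the alternating sum of a monotone nonnegative sequence is bounded in absolute value by its largest term, one gets $|A-\tilde A|\le 2\max_j\P[X_d=j]$. The local limit theorem gives $\max_j\P[X_d=j]=O(1/\sqrt{\log n})\to0$, so $A-\tilde A\to0$ and hence $\E[f_k(C_n^B)]/\binom nk\to\Phi(-c)=\tfrac{1}{\sqrt{2\pi}}\int_{-\infty}^{-c}\e^{-x^2/2}\diff x$.

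\textbf{Main obstacle.} The two strict-inequality regimes are routine CLT tail estimates. The delicate point is the critical case, where I must show that the two interlaced parity sums below $d$ share the same limit, i.e.\ $A-\tilde A\to0$; the CLT alone controls only their sum $A+\tilde A$. The cleanest resolution is the unimodality/alternating-series bound above, which reduces the claim to $\max_j\P[X_d=j]\to0$ supplied by the local limit theorem. A more computational alternative replaces each $\P[X_d=j]$ by its Gaussian approximation and recognizes $A$ as half a Riemann sum for $\Phi(-c)$, but this demands careful control of the error accumulated over the $\Theta(\sigma_n)$ relevant terms together with the left tail, which is why I would favor the alternating-sum route.
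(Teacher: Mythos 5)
Your proposal is correct. The subcritical and supercritical regimes are handled exactly as in the paper: bound the parity sum by a one-sided tail via \eqref{Equation_d-1} or \eqref{Complement_Equation_d-1}, standardize, and apply the CLT (Theorem~\ref{thm:clt_lah_r}). The critical case, however, takes a genuinely different route. The paper replaces the parity sum by a full tail probability using a sandwich $\P[\Lah(n,k)_{1/2}\le d-2]\le 2A\le\P[\Lah(n,k)_{1/2}\le d]$, which requires knowing that the pmf is nondecreasing up to $d$; this is extracted from Proposition~\ref{location_mode} (the mode sits at $d+c\sqrt d+o(\sqrt d)$) and forces a case distinction on the sign of $c$ ($c>0$ directly, $c<0$ via the complement \eqref{Complement_Equation_d-1}, $c=0$ by a further sandwich). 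You instead isolate the difference $A-\tilde A$ of the two interlaced parity sums, bound it by $2\max_j\P[X_d=j]$ using unimodality (from Proposition~\ref{rLahislogconcave}) and the alternating-series estimate, and kill it with the local limit theorem. Your version trades the mode-location proposition for the local limit theorem, and in exchange treats all values of $c$ uniformly with no case distinction; the paper's version is more elementary in that it needs only the CLT and the mode asymptotics, not the stronger local limit theorem. Both inputs are established earlier in the paper, so either route closes the argument.
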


\begin{proof}
First, let $\gamma > \frac{1}{k+ \frac{1}{2}}$. Then,
\begin{alignat*}{2}
&\frac{\E\bigl[ f_{k}(C_{n}^B) \bigr]}{\binom{n}{k}} &=&2 \P\left[\Lah(n,k)_{\frac{1}{2}} \in \lbrace d-1,d-3, \dotsc \rbrace\right] \\\
&&\leq & 2 \P\left[\Lah(n,k)_{\frac{1}{2}} \leq d\right] \nonumber \\
&&=& 2 \P \left[ \frac{\Lah(n,k)_{\frac{1}{2}} - \left(k+\frac{1}{2}\right)\log n}{\sqrt{\left(k+\frac{1}{2}\right)\log n}} \leq  \frac{d - \left(k+\frac{1}{2}\right)\log n}{\sqrt{\left(k+\frac{1}{2}\right)\log n}}\right] \nonumber \\
&&=&2 \P \left[ \frac{\Lah(n,k)_{\frac{1}{2}} - \left(k+\frac{1}{2}\right)\log n}{\sqrt{\left(k+\frac{1}{2}\right)\log n}} \leq \frac{\sqrt{\log n} \left( \frac{1}{\gamma} + o(1) -\left(k+\frac{1}{2}\right) \right)}{\sqrt{k+\frac{1}{2}}} \right]. \nonumber
\end{alignat*}
Since  $\frac{1}{\gamma} < k + \frac{1}{2}$, the last fraction on the right-hand side goes to $-\infty$ and the probability  goes to $0$ by the CLT for the $r$-Lah distribution, see Theorem~\ref{thm:clt_lah_r}.

Let now  $\gamma < \frac{1}{k+\frac{1}{2}}$.  Equation (\ref{Complement_Equation_d-1}) gives
\begin{alignat*}{2}
&1-\frac{\E\left[ f_{k}(C_{n}^B) \right]}{\binom{n}{k}}& \leq& 2\P\left[ \Lah(n,k)_{\frac{1}{2}} \geq d \right] \\
&&=&2 \P \left[ \frac{\Lah(n,k)_{\frac{1}{2}} - \left(k+\frac{1}{2}\right)\log n}{\sqrt{\left(k+\frac{1}{2}\right)\log n}} \geq \frac{\sqrt{\log n} \left( \frac{1}{\gamma}+o(1) -\left(k+\frac{1}{2}\right) \right) }{\sqrt{k+\frac{1}{2}}} \right].
\end{alignat*}
Since $\gamma < \frac{1}{k+\frac{1}{2}}$, the last fraction on the right-hand side goes to $+\infty$ and Theorem~\ref{thm:clt_lah_r} implies that the probability goes to $0$.

Finally, we consider the critical case when~\eqref{eq:log_n_critical_case} holds. If the right-hand side of (\ref{Equation_d-1}) could be replaced with $\P[\Lah(n,k)_{\frac{1}{2}} \leq d]$, we could once again use Theorem~\ref{thm:clt_lah_r} in the following way:
\begin{align*}
\P[\Lah(n,k)_{\frac{1}{2}} \leq d]
&=
\P \left[ \frac{\Lah(n,k)_{\frac{1}{2}} - \left(k+\frac{1}{2}\right)\log n}{\sqrt{\left(k+\frac{1}{2}\right)\log n}} \leq  \frac{d - \left(k+\frac{1}{2}\right)\log n}{\sqrt{\left(k+\frac{1}{2}\right)\log n}}\right]
\\
&=
\P \left[ \frac{\Lah(n,k)_{\frac{1}{2}} - \left(k+\frac{1}{2}\right)\log n}{\sqrt{\left(k+\frac{1}{2}\right)\log n}} \leq -c+o(1) \right]
\\
&\underset{{d \to \infty}}{\longrightarrow} \frac{1}{\sqrt{2\pi}} \int_{- \infty}^{-c} \e^{-x^2/2} \diff x.
\end{align*}
The justification of this replacement can be done as follows (c.f.\ \cite[Theorem~6.2]{kabluchko:2022}).
By Proposition \ref{location_mode} the largest mode of the $r$-Lah distribution $m_{n,k,r}$ for $r=1/2$ satisfies
\begin{align*}
m_{n,k,\frac{1}{2}} = \left( k+\frac{1}{2} \right) \log n + O(1) = d + c\sqrt{d} + o(\sqrt{d}).
\end{align*}
Thus, if $c>0$,  it follows that $d<m_{n,k,\frac{1}{2}}$ for sufficiently large $d$. 	As the $r$-Lah distribution is unimodal, the function $j \mapsto \P[\Lah(n,k)_{\frac{1}{2}} = j]$ is nondecreasing for all $j \leq d$. We conclude that
\begin{alignat*}{2}
&\P[\Lah(n,k)_{\frac{1}{2}} \leq d-2] & \leq & 2\P[\Lah(n,k)_{\frac{1}{2}} \in \lbrace d-1, d-3,\dotsc \rbrace] \\
&&\leq & \P[\Lah(n,k)_{\frac{1}{2}} \leq d].
\end{alignat*}
Applying Theorem~\ref{thm:clt_lah_r} to both sides of the inequality completes the proof for $c >0$.
The case  $c<0$ is similar using~\eqref{Complement_Equation_d-1}, while the case $c=0$ follows by a sandwich argument.
\end{proof}

\begin{bem}
Take  $k=0$ in Theorem~\ref{Treshold}. Then $f_{0}(C_{n}^B) = 0$ or $1$ depending on whether $C_{n}^B=\R^d$ or not. In this case, Theorem~\ref{Treshold} recovers Corollary~5.4 from~\cite{kabluchko:2016}.
\end{bem}

\begin{satz}[Strong Threshold]\label{strong_treshold}
For fixed $k \in \N_0$, let $n=n(d) \geq  d$ be an integer sequence. If $n(d) \leq \e^{d/((k+ \frac{1}{2})e)}$ for all sufficiently large $d$, then
the random cone $C_n^B$ is $k$-neighbourly with probability converging to  $1$, more precisely
\begin{align}
\P \left[ f_{k}(C_{n}^B) = \binom{n}{k} \right] = 1 - O\left(n^{-\frac 12}\right).
\end{align}
\end{satz}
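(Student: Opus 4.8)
The plan is to bound the failure probability $1-\P[f_k(C_n^B)=\binom nk]$ by a first-moment (Markov) argument, and then to control the resulting expectation gap by a Chernoff bound for the $r$-Lah distribution, evaluated at the single value $z=1$ and fed by the sharp exponential-moment estimate of Theorem~\ref{rLahismodPoiss}.

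First I would reduce to a tail estimate. Since $f_k(C_n^B)$ is integer-valued and bounded above by $\binom nk$ almost surely ($k$-neighbourliness is the extremal case), the event $\{f_k(C_n^B)<\binom nk\}$ coincides with $\{\binom nk-f_k(C_n^B)\ge 1\}$, so Markov's inequality gives
\[
1-\P\Bigl[f_k(C_n^B)=\tbinom nk\Bigr]\le \E\Bigl[\tbinom nk-f_k(C_n^B)\Bigr]=\tbinom nk-\E\bigl[f_k(C_n^B)\bigr].
\]
By~\eqref{Complement_Equation_d-1} the right-hand side equals $2\binom nk\,\P[\Lah(n,k)_{1/2}\in\{d+1,d+3,\dotsc\}]$, and since $\{d+1,d+3,\dotsc\}\subseteq\{j\ge d\}$ this is at most $2\binom nk\,\P[\Lah(n,k)_{1/2}\ge d]$. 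As $k$ is fixed, $\binom nk\sim n^k/k!$, so it suffices to prove $\P[\Lah(n,k)_{1/2}\ge d]=O\bigl(n^{-(k+1/2)}\bigr)$.

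The central step is a Chernoff bound at $z=1$. Since $1\in\R\subseteq\mathcal D_{\Lah}$, taking the compact set $K=\{1\}$ in Theorem~\ref{rLahismodPoiss} yields, with a genuinely $o(1)$ error term,
\[
\E\bigl[\e^{\Lah(n,k)_{1/2}}\bigr]=P_{n,k,1/2}(\e)=\frac{\Gamma(k+1)}{\Gamma\bigl((k+\tfrac12)\e+\tfrac12\bigr)}\,n^{(k+1/2)(\e-1)}\bigl(1+o(1)\bigr).
\]
The hypothesis $n\le\e^{d/((k+1/2)\e)}$ is exactly equivalent to $d\ge(k+\tfrac12)\e\log n$, i.e.\ $\e^{-d}\le n^{-(k+1/2)\e}$. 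Hence, applying $\P[\Lah(n,k)_{1/2}\ge d]\le \e^{-d}\,\E[\e^{\Lah(n,k)_{1/2}}]$ and substituting,
\[
\P\bigl[\Lah(n,k)_{1/2}\ge d\bigr]\le C\,n^{-(k+1/2)\e+(k+1/2)(\e-1)}\bigl(1+o(1)\bigr)=C\,n^{-(k+1/2)}\bigl(1+o(1)\bigr),
\]
since the two exponents collapse to $-(k+\tfrac12)$. Combining with the reduction of the previous paragraph gives $1-\P[f_k(C_n^B)=\binom nk]\le 2\binom nk\cdot C\,n^{-(k+1/2)}(1+o(1))=O(n^{-1/2})$, which is the assertion.

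The one delicate point is the choice $z=1$ in the Chernoff step: through $x=\e^z$ it corresponds precisely to the boundary value $x=\e$ of the large-deviation rate function $x\mapsto x\log x-x+1$ (which equals $1$ at $x=\e$), the very regime singled out by the assumption $n\le\e^{d/((k+1/2)\e)}$. Keeping $z=1$ fixed, rather than letting it depend on $d$, is what guarantees the estimate, since an optimizing exponent $z=\log\bigl(d/((k+\tfrac12)\log n)\bigr)$ could leave every compact subset of $\mathcal D_{\Lah}$, where the uniform error control of Theorem~\ref{rLahismodPoiss} is no longer available; with $z=1$ the bound is automatically uniform over all admissible $d$, so one does not need the precise large-deviation asymptotics of Theorem~\ref{theo:LDP} (which would additionally require $x_n$ to converge).
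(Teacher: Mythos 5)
Your proof is correct, and for the key step it takes a genuinely different route from the paper. Both arguments begin identically: Markov's inequality on the integer-valued deficiency $\binom nk-f_k(C_n^B)$, the identity \eqref{Complement_Equation_d-1}, and the inclusion of $\{d+1,d+3,\dotsc\}$ into $\{j\geq d\}$ reduce the claim to $\P[\Lah(n,k)_{1/2}\geq d]=O(n^{-(k+1/2)})$. The paper then invokes the precise large-deviation asymptotics of Theorem~\ref{theo:LDP} with $x_n=d/((k+\frac12)\log n)\geq \e$ and uses the monotonicity of $x\mapsto x\log x-x+1$ to force the exponent below $-(k+\frac12)$; you instead apply the exponential Markov (Chernoff) bound at the fixed tilt $z=1$, feeding it with the exponential moment $\E[\e^{\Lah(n,k)_{1/2}}]=\frac{k!}{\Gamma((k+1/2)\e+1/2)}n^{(k+1/2)(\e-1)}(1+o(1))$ from Theorem~\ref{rLahismodPoiss}, and the hypothesis $\e^{-d}\leq n^{-(k+1/2)\e}$ collapses the exponents to $-(k+\frac12)$ exactly as you compute. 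Your closing observation is also apt and identifies a real advantage: Theorem~\ref{theo:LDP} is stated for sequences $x_n$ converging to a fixed $x>0$, whereas the sequence $x_n=d/((k+\frac12)\log n)$ arising here is only known to satisfy $x_n\geq\e$ and need not converge or even stay bounded, so the paper's appeal to that theorem strictly speaking requires an additional uniformity or subsequence argument. Your fixed-tilt Chernoff bound at $z=1$ (the conjugate point of the boundary value $x=\e$, where the rate function equals $1$) sidesteps this entirely and needs only the pointwise mod-Poisson limit at a single real point; the price is that it yields only an upper bound rather than sharp asymptotics, but an upper bound is all the statement requires.
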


\begin{proof}
On the event $f_k(C_n^B) \neq \binom{n}{k}$ it holds that $\binom{n}{k} - f_k(C_n^B) \geq 1$, hence
\begin{align*}
\P \left[ f_k(C_n^B) \neq \binom{n}{k} \right] \leq \E \left[ \binom{n}{k} - f_k(C_n^B) \right] =
\binom{n}{k} \left( 1- \frac{\E[f_k(C_n^B)]}{\binom{n}{k}} \right).
\end{align*}
In view of~\eqref{Complement_Equation_d-1} this gives us
\begin{align*}
\P \left[ f_k(C_n^B) \neq \binom{n}{k} \right]  \leq 2n^k \P \left[ \Lah(n,k)_{\frac{1}{2}} \geq (k+ 1/2) x_n \log n \right]
\end{align*}
with $x_n \coloneqq d/((k+1/2)\log n)$. Note that  $x_n \geq e$ by assumption. Applying Theorem~\ref{theo:LDP} we get
\begin{align*}
\P \left[ f_k(C_n^B) \neq \binom{n}{k} \right] \leq 2 n^k n^{-(k+\frac{1}{2})(x_n\log(x_n)-x_n+1)}
\end{align*}
(the missing fractions can be bounded above by $1$).
Since $f(x)\coloneq x\log x-x+1$ is strictly increasing for $x>1$, we have $f(x_n)  \geq  f(e) = 1$. Thus
\begin{align*}
\P \left[ f_k(C_n^B) \neq \binom{n}{k} \right] \leq 2 n^{k}n^{-(k+\frac 12)} = O\left(n^{-\frac 12}\right),
\end{align*}
which proves the statement.
\end{proof}

\section{Application to compressed sensing of monotone signals}\label{sec:compressed_sensing}
Now we present an application of Theorem \ref{Treshold} to compressed sensing.  The following general setting has been introduced by Donoho and Tanner~\cite{donoho:2010}. Let $P\subseteq \R^n$ be a polyhedral set, that is, an intersection of finitely many closed half-spaces (whose bounding hyperplanes need not pass through the origin). For an unknown signal $x= (x_1,\dotsc ,x_n)$ from  $P$ and a Gaussian random matrix $G : \R^n \to \R^d$ with $d \leq n$ we are interested in uniquely recovering the signal from its image $y=Gx$.
This event is denoted by $\text{Unique}(G,x,P)$ and takes place if and only if $(x+ \ker G ) \cap P = \{x\}$.
Donoho and Tanner computed the probability of unique recovery for $P=\R_+^n$ (and also $P=[0,1]^n$) if $x$ is $k$-sparse (meaning that it has $k$ non-zero entries) and uncovered threshold phenomena for this probability in the linear growth regime of $n,k,d$. In the setting when $k\in \N_0$ is constant and $d,n\to\infty$ such that $d/n \to \delta \in [0,1]$ it is known~\cite[Proposition~4.3]{godland:2020}
that
\begin{equation}\label{eq:unique_probab_donoho_tanner}
\lim_{n\to\infty} \P[\text{Unique}(G,x,\R_+^n)] =
\begin{cases}
1, &\text{if }\quad \delta > 1/2 ,\\
0, &\text{if } \quad \delta < 1/2.
\end{cases}
\end{equation}
Using the above results on the $r$-Lah distribution with $r=1/2$ we will study a similar problem for \textit{monotone} sparse signals. To be precise, our aim is to recover a monotone signal $x = (x_1, \dotsc ,x_n)$  that belongs to the Weyl chamber $B^{(n)} \coloneqq \lbrace x \in \R^n | x_1 \geq x_2 \geq \dotsc \geq x_n \geq 0 \rbrace $. We use the following probabilistic model for $x$. Let $0 < k \leq n$ be given together with positive numbers $a_1, \dotsc ,a_k$. Select a random uniform subset $\lbrace i_1,\dotsc, i_k \rbrace$ of $\lbrace 1,\dotsc ,n \rbrace$ with $1 \leq i_1 < \dotsc < i_k \leq n$
and define the signal $x=(x_1,\dotsc,x_n)$ by
$$
x_m \coloneqq \sum_{l=1}^k a_l \ind_{i_l\geq m} \qquad \text{for }m=1,\dotsc,n.
$$
Note that the entries of $x$ are monotone non-increasing (and non-negative) and the sparsity parameter $k$ gives the number of jumps in  $x$. For $k=0$ we just put $x=0$.
Proposition~3.17 of~\cite{godlandschlaefli:2022} calculates the probability of the unique recovery event in the above setting to be
$$
\P[\text{Unique}(G,x,B^{(n)})] = 2 \frac{k!}{ n! \binom{n}{k}} \sum_{l=0}
^\infty \stirfir{n}{d-2l-1}_{\frac{1}{2}} \stirsec{d-2l-1}{k}_{\frac{1}{2}},
$$
for $0\leq k \leq d \leq n$, and thus  it follows from~\eqref{Equation_d-1} that
$$
\P[\text{Unique}(G,x,B^{(n)})] = \frac{\E\left[ f_k(C_n^B) \right]}{\binom{n}{k}}.
$$
Therefore, Theorem \ref{Treshold}  yields the following threshold phenomenon for the event of unique recovery.
\begin{satz}\label{Treshold_sensing}
Fix $k\in \N_0$. Let $d \to \infty$ and $n=n(d)\geq d$ be a function of $d$ with
\begin{align*}
\gamma \coloneqq \lim_{d \to \infty} \frac{\log\bigl(n(d)\bigr)}{d} \in [0,\infty].
\end{align*}
Let $x \in \R^n$ be a random signal in the Weyl chamber $B^{(n)}$ constructed as above. Finally, let $G : \R^n \to \R^d$ be a Gaussian random matrix.
Then,
\begin{align*}
\lim_{d \to \infty} \P[\mathrm{Unique}(G,x,B^{(n)})] = \begin{cases}1, &\text{if }\quad \gamma < \frac{1}{k+\frac{1}{2}},\\
0, &\text{if } \quad \gamma > \frac{1}{k+\frac{1}{2}}.
\end{cases}
\end{align*}
In the critical case $\gamma = \frac{1}{k+\frac{1}{2}}$, if
\begin{align*}
\log\bigl( n(d) \bigr) = \biggl(\frac{1}{k+\frac{1}{2}}\biggr)\biggl( d+ c \sqrt{d} + o\Bigl( \sqrt{d} \Bigr) \biggr)
\end{align*}
for some constant $c\in \R$, then
\begin{align*}
\lim_{d \to \infty} \P[\operatorname{Unique}(G,x,B^{(n)})] = \frac{1}{\sqrt{2\pi}} \int_{-\infty}^{-c} \e^{-y^2/2} \diff y.
\end{align*}
\end{satz}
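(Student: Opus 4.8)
The plan is to treat this as an immediate corollary of Theorem~\ref{Treshold}, transported through the identity
\[
\P[\mathrm{Unique}(G,x,B^{(n)})] = \frac{\E\left[ f_k(C_n^B) \right]}{\binom{n}{k}}
\]
that was recorded just above the statement (obtained by feeding Proposition~3.17 of~\cite{godlandschlaefli:2022} into~\eqref{Equation_d-1}). So the first thing I would do is invoke this identity, observing that it is valid whenever $0 \le k \le d \le n$; since $k$ is fixed and $d \to \infty$ with $n(d) \ge d$, it holds for all sufficiently large $d$, which is all that a limit statement requires.

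With the identity in place, I would simply read off the three regimes from the corresponding conclusions of Theorem~\ref{Treshold}. The hypotheses are identical in both theorems --- $k$ fixed, $n(d)\ge d$, the existence of $\gamma = \lim_{d\to\infty}\log(n(d))/d$, and in the critical case the expansion $\log(n(d)) = (k+\tfrac12)^{-1}\bigl(d + c\sqrt d + o(\sqrt d)\bigr)$ --- so no re-derivation is needed. The subcritical case $\gamma < (k+\tfrac12)^{-1}$ yields limit $1$, the supercritical case $\gamma > (k+\tfrac12)^{-1}$ yields limit $0$, and the critical case yields the Gaussian tail $\tfrac{1}{\sqrt{2\pi}}\int_{-\infty}^{-c}\e^{-y^2/2}\diff y$, in each instance precisely because the normalized face number $\E[f_k(C_n^B)]/\binom nk$ has exactly this limit by Theorem~\ref{Treshold}.

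I expect no serious obstacle here: all the analytic content --- the central limit theorem for $\Lah(n,k)_{1/2}$ (Theorem~\ref{thm:clt_lah_r}) together with the unimodality-based sandwiching that pins down the critical regime --- is already encapsulated in Theorem~\ref{Treshold}. The only point I would make explicit is the compatibility of the parameter ranges (the identity needs $d \ge k$, which is automatic once $d$ is large), after which the result is a one-line substitution.
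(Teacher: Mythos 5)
Your proposal is correct and matches the paper's own argument exactly: the paper derives the identity $\P[\mathrm{Unique}(G,x,B^{(n)})] = \E[f_k(C_n^B)]/\binom{n}{k}$ from Proposition~3.17 of the cited reference combined with~\eqref{Equation_d-1}, and then reads off all three regimes directly from Theorem~\ref{Treshold}. Your extra remark about the parameter range $0\le k\le d\le n$ being satisfied for large $d$ is a harmless (and correct) bit of added care.
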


\begin{bem}
For monotone signals in the  type $A$ Weyl chamber
$$
A^{(n)} \coloneqq \lbrace x \in \R^n | x_1 \geq x_2 \geq \dotsc \geq x_n\rbrace
$$
 a similar analysis is possible using Proposition~3.18 in~\cite{godlandschlaefli:2022} and replacing $r=1/2$ by $r=0$ throughout. For two more examples with $r=0$, see Propositions~3.19, 3.20  in~\cite{godlandschlaefli:2022}.
\end{bem}
Let us now compare this statement on unique recovery of $k$-sparse monotone signals to the result~\eqref{eq:unique_probab_donoho_tanner} on unique recovery of $k$-sparse non-negative signals, with fixed $k$.
Roughly speaking, for non-negative  signals, \eqref{eq:unique_probab_donoho_tanner} states that unique recovery  is possible if $d$ is at least $n/2$. On the other hand, for monotone signals, Theorem~\ref{Treshold_sensing} states that unique recovery  is possible if $d$ is larger than $(k+\frac 12)\log n$. Clearly, one can pass from a non-negative signal to a monotone one by building the partial sums of its entries, which does not change the sparsity parameter $k$.  The above results suggest that these partial sums can be compressed at a much higher rate than the original signal.

\subsection*{Acknowledgement}
We are grateful to Hsien-Kuei Hwang for pointing out reference~\cite{van_der_hofstad_etal_shortest_path_trees}.
Supported by the German Research Foundation under Germany’s Excellence Strategy EXC 2044 – 390685587, Mathematics M\"unster: Dynamics - Geometry - Structure and by the DFG priority program SPP 2265 Random Geometric Systems.

\bibliographystyle{plain} 
\bibliography{refs} 

\end{document}